\let\pa=\partial
\let\al=\alpha
\let\b=\beta
\let\g=\gamma
\let\d=\delta
\let\lam=\lambda
\let\r=\rho
\let\s=\sigma
\let\f=\frac
\let\ka=\kappa
\let\om=\omega
\let\G= \Gamma
\let\D=\Delta
\let\S=\Sigma
\let\Om=\Omega
\let\e=\varepsilon
\let\pa=\partial
\let\va=\varphi
\let\ri=\rightarrow
\let\na=\nabla
\let\th=\theta
\def\non{\nonumber}
\def\di{\mathrm{div}\,}
\newcommand{\beq}{\begin{equation}}
\newcommand{\eeq}{\end{equation}}
\newcommand{\beqo}{\begin{equation*}}
\newcommand{\eeqo}{\end{equation*}}
\newcommand{\ben}{\begin{eqnarray}}
\newcommand{\een}{\end{eqnarray}}
\newcommand{\beno}{\begin{eqnarray*}}
\newcommand{\eeno}{\end{eqnarray*}}
\newtheorem{thm}{Theorem}[section]
\newtheorem{theorem}{Theorem}[section]
\newtheorem{definition}[theorem]{Definition}
\newtheorem{lemma}[theorem]{Lemma}
\newtheorem{proposition}[theorem]{Proposition}
\newtheorem{corol}[theorem]{Corollary}
\newtheorem{Theorem}{Theorem}[section]
\newtheorem{Remark}[Theorem]{Remark}
\theoremstyle{remark}
\newtheorem{rmk}{Remark}[section]
\newcommand{\dist}{\mathrm{dist}}
\newcommand{\BR}{\mathbb{R}}
\newcommand{\ch}{\mathcal{H}}
\newcommand{\cl}{\mathcal{L}}
\newcommand{\ta}{\tilde{A}}
\begin{document}
\title[Asymptotic behavior of free boundary]{Asymptotic behavior of the free interface for entire vector minimizers in phase transitions}

\author{Nicholas D. Alikakos$^1$}
\address{$^1$Department of Mathematics, University of Athens (EKPA), Panepistemiopolis, 15784 Athens, Greece}
\email{nalikako@math.uoa.gr}

\author{Zhiyuan Geng$^2$}
\address{$^2$Basque Center for Applied Mathematics, Alameda de Mazarredo 14
48009 Bilbao, Bizkaia, Spain}
\email{zgeng@bcamath.org}

\author{Arghir Zarnescu$^{2,3,4}$}
\address{$^3$IKERBASQUE, Basque Foundation for Science, Plaza Euskadi 5 48009 Bilbao, Bizkaia, Spain}
\address{$^4$``Simion Stoilow" Institute of the Romanian Academy, 21 Calea Grivi\c{t}ei, 010702 Bucharest, Romania}
\email{azarnescu@bcamath.org}

\thanks{This research is supported by the Basque Government through the BERC 2018-2021 program and by the Spanish State Research Agency through BCAM Severo Ochoa excellence accreditation SEV-2017-0718 and through project PID2020-114189RB-I00 funded by Agencia Estatal de Investigaci\'{o}n (PID2020-114189RB-I00 / AEI / 10.13039/501100011033).}

\begin{abstract}
We study globally bounded entire minimizers $u:\BR^n\ri\BR^m$ of Allen-Cahn systems for potentials $W\geq 0$ with $\{W=0\}=\{a_1,...,a_N\}$ and $W(u)\sim |u-a_i|^\al$ near $u=a_i$, $0<\al<2$. Such solutions are, over large regions, identically equal to some zeroes of the potential $a_i$'s. We establish the estimates
\beqo
\cl^n(I_0\cap B_r(x_0))\leq c_1r^{n-1},\quad \ch^{n-1}(\pa^* I_0\cap B_r(x_0))\geq c_2r^{n-1}, \quad r\geq r_0(x_0)
\eeqo
for the \emph{diffuse interface} $I_0:=\{x\in\BR^n: \min_{1\leq i\leq N}|u(x)-a_i|>0\}$ and the \emph{free boundary} $\pa I_0$. Furthermore, if $\al=1$ we establish the upper bound
\beqo
\ch^{n-1}(\pa^* I_0\cap B_r(x_0))\leq  c_3r^{n-1}, \quad r\geq r_0(x_0).
\eeqo
\end{abstract}

\maketitle

\section{Introduction and description of the main results}

The object of study in the present paper is a class of entire solutions of the system
\beq\label{Euler-Lagrange equation}
\Delta u-W_u(u)=0,\quad u:\BR^n\ri\BR^m,
\eeq
$n,m\in\mathbb{N}^+$, where $W:\BR^m\ri\BR$ is a phase transition potential that is nonnegative and vanishes only on a finite set $\{W=0\}=:A=\{a_1,...a_N\}$ for some distinct points $a_1,...,a_N\in\BR^m$ that represent the phases of a substance which can exist in $N\geq 2$ different equally preferred phases.

The system \eqref{Euler-Lagrange equation} is the Euler-Lagrange equation corresponding to the Allen-Cahn free energy functional
\beq\label{AC functional}
J_D(v)=\int_D \left( \f12|\na v|^2+W(v) \right)\,dx.
\eeq

We restrict ourselves to maps $u\in W_{loc}^{1,2}(\BR^n,\BR^m)\cap L^\infty(\BR^n,\BR^m)$, which minimize $J$ subject to their Dirichlet data,
\beq\label{def of minimizer}
J_D(u+v)\geq J_D(u),\quad \forall v\in W_0^{1,2}(D,\BR^m)\cap L^\infty(D,\BR^m)
\eeq
for any open, bounded Lipschitz set $D\subset \BR^n$. We call such maps \emph{entire minimizers} of $J$, and note that they clearly satisfy \eqref{Euler-Lagrange equation}, under appropriate regularity hypothesis on $W$.

In relation to the phase transition interpretation, an interesting and difficult problem is the existence of multi-phase solutions, that are solutions with the following geometrical properties:

\emph{There is an $\hat{N}\in \mathbb{N}$ with $2\leq \hat{N}\leq N$, $\hat{a}_1,...,\hat{a}_{\hat{N}}\in A$, a small $\g>0$ and open sets $\Om_1,...,\Om_{\hat{N}}$ such that}
\beq\label{partition by various phases}
\BR^n=I\cup\left( \bigcup\limits_{j=1}^{\hat{N}}\Om_j \right)
\eeq
\emph{with $I$ being a set of thickness $O(1)$ and}
\beq\label{in each phase}
|u(x)-\hat{a}_j|\leq \g,\quad\forall x\in \Om_j, \text{ for some small }\g
\eeq
The set $I$ plays the role of a diffuse interface that separates the coexisting phases. Understanding the geometrical structure of this diffuse interface is a major point in the study of such solutions \cite{F}.

In the scalar case $m=1$, for $W\in C^2(\BR^m,\BR)$, $a_i$ nondegenerate (i.e. $\f{\pa^2 W}{\pa u^2}>0$ at the minima), and for $N=2$ which is the natural choice, there is a rich literature and many important results. We list some of these works and organize them in two groups: papers that address various general aspects (see \cite{ft,ks,gurtin,sternberg,cc,cc2,bcn,modica1,modica2,farina}); and papers that are motivated by a celebrated conjecture of De Giorgi (see \cite{degiorgi,mm,gg,ac,s,wei,ct,dkw0}) where a relationship of $I$ with minimal surfaces, and in particular with hyperplanes in low dimensions, is established. The reader could also consult the expository papers \cite{fv,savin,s0,dkw}.

In the vector case $m\geq 2$ and $N\geq 3$, the structure of $I$ is not expected to be planar in any dimension $n\geq 2$ but rather linked to the minimal cones\footnote{The complete classification of the minimal cones in $\BR^n$ is known only for n=3 \cite{taylor}. We refer to Section 7 in the expository paper of David \cite{david} and references therein.} in $\BR^n$. This kind of solutions are called \emph{junctions} and have been shown to exist for $n=m=2$ and $n=m=3$ with $\hat{N}=N=3$ and $\hat{N}=N=4$ respectively, but so far only under symmetry hypotheses. Specifically, the existence is established for:
\begin{itemize}
\item $n=2$ with respect to the symmetries of the equilateral triangle \cite{bgs},
\item $n=3$ with respect to the symmetries of the tetrahedron \cite{gs},
\end{itemize}
and with \eqref{def of minimizer} verified only in their respective equivariance classes. We refer to \cite{book-afs} where the symmetric case is covered in detail for general reflection point groups and also for lattices.

For subquadratic potentials that behave like $|u-a|^\al$ near $a\in A$, $\al\in(0,2)$, the interface is less diffuse as we explain below. More precisely we consider potentials that satisfy

\begin{enumerate}
\itemsep0.6em
   \item[H1.] $W\in C(\BR^m, [0,\infty))\cap C^2_{loc}(\BR^m\backslash A)$ with $\{W=0\}=A=\{a_1,\dots,a_N\}$, $N\geq 2$.
   %\item[H2.] $\liminf\limits_{|u|\ri\infty} W(u)>0$.
   \item[H2.] Set
   \beq\label{def of r0}
   r_0:=\f12\min\limits_{1\leq i\neq j\leq N}|a_i-a_j|.
   \eeq
 For any $a_i\in A$, $W(u)$ can be written as
       \beq\label{formula for W near ai}
       W(u)=|u-a_i|^\al g_i(u) \;\;\text{ in }u\in B_{r_0}(a_i)
       \eeq
       for some function $g_i(u)\in C^2(B_{r_0}(a_i), [c_i,\infty))$ where $c_i$ is a positive constant.
\end{enumerate}

For such subquadratic potentials, \eqref{in each phase} is replaced by
\beq\label{in each phase free bdy}
u(x)=\hat{a}_j,\quad\forall  x\in \Om_j.
\eeq
Indeed, in this case the entire minimizer possesses a free boundary and the phases $\hat{a}_j$ are attained \cite{agz}, while for $\al=2$, the solution converges exponentially at infinity to the phases. The subquadratic assumption can be thought as a reduction which simplifies without changing the essential features of this type of solution. In a suitable limit $\al\ri0$, \eqref{AC functional} becomes the Alt-Caffarelli functional.

We now define the appropriate analog of the set $I$ in \eqref{partition by various phases} for vector analog of the subquadratic potentials.
\begin{definition}\label{diffuse interface}
Let $0<\g_0<\f12\min\limits_{i\neq j}|a_i-a_j|$ fixed. Set
\beq\label{def of delta}
\d(x)=\dist(u(x),\{W=0\})
\eeq
where $\dist$ stands for the Euclidean distance. Let $0<\g<
\g_0$ and assume $\g_0<\sup\limits_{\BR^n} \d(x)$. We define the set
\beq\label{def of diffuse interface}
I_\g=\{x\in\BR^n:\d(x)\geq \g\}.
\eeq
\end{definition}

For entire minimizers satisfying $\|u\|_{L^\infty(\BR^n,\BR^m)}<\infty$, $\|\na u\|_{L^\infty}(\BR^n,\BR^m)<\infty$, and $0<\al\leq 2$, the following estimate is known (see \cite[Lemma 5.5]{book-afs})
\beq\label{est of I_g}
c_1(\g)r^{n-1}\leq \cl(I_\g\cap B_r(x_0))\leq c_2(\g)r^{n-1},\quad r\geq r(x_0),
\eeq
where $x_0\in\BR^n$ is arbitrarily chosen, $r(x_0)$ is a positive constant depending on $x_0$, and the constants $c_i(\g)>0 \,(i=1,2)$ is independent of $x_0,\,r$.

Clearly $I_{\g_1}\subset I_{\g_2}$ if $\g_1>\g_2$ and we define
\beq\label{I_0}
\lim\limits_{\g\ri 0} I_\g=I_0=\{x\in\BR^n: \d(x)>0\}=: \text{Diffuse Interface}.
\eeq

The constants $c_1(\g),\, c_2(\g)$ in \eqref{est of I_g} degenerate as $\g\ri 0$ (see \cite{book-afs}) and so no useful information can be obtained for $I_0$ out of \eqref{est of I_g}.

Our main result in the present paper concerns certain global facts on $I_0$ that can be summarized in the following theorem.

\begin{theorem}\label{main}
Let $\al\in(0,2)$, $u:\BR^n\ri\BR^m$ be a bounded entire minimizer with $I_{\g_0}\neq \varnothing$ (i.e. $u\not\equiv a_i$), and assume H1, H2 above. Then there exists a radius $r_0>0$ and positive constants $c,\,c_1,\,c_2$, which only depend on $u$ but not on $r$, such that for $r\geq r_0$ the following estimates hold:
\begin{align}
\label{measure of I_0} &\cl^n(I_0\cap B_r(0^n))\leq cr^{n-1},\\
\label{free bdy lower bdd}  &\ch^{n-1}(\pa^* I_0\cap B_r(0^n))\geq c_1r^{n-1},
\end{align}
where $\pa^*$ denotes the De Giorgi reduced boundary.

If furthermore $\al=1$ we have the upper bound estimate
\beq\label{est free bdy}
\ch^{n-1}(\pa^* I_0\cap B_r(0^n))\leq c_2r^{n-1}.
\eeq
\end{theorem}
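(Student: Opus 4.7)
The starting input is the classical linear energy estimate $J_{B_r}(u)\le Cr^{n-1}$ for entire minimizers (see \cite[Ch.~5]{book-afs}), obtained by comparing $u$ with a competitor that cuts off to the nearest phase on a thin collar. Split $I_0 = I_{\gamma_0}\cup (I_0\setminus I_{\gamma_0})$. On $I_{\gamma_0}$ we have $W(u)\ge c_0(\gamma_0)>0$, so the energy bound directly gives $\cl^n(I_{\gamma_0}\cap B_r)\le Cr^{n-1}$. For the transition layer $\{0<\d<\gamma_0\}$ the energetic lower bound on $W$ degenerates, so I would use instead the obstacle-type nondegeneracy of \cite{agz}, which forces $\d(x)\ge c\,\dist(x,\pa I_0)^{2/(2-\al)}$ and hence confines the layer to a tubular neighborhood of $\pa I_0$ of width $O(\gamma_0^{(2-\al)/2})$. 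Coupling this with a Vitali covering argument (each ball of fixed radius $R$ centered at a point of $I_0$ contributes $\ge cR^{n-1}$ of energy, by the standard density lemma for Allen--Cahn minimizers) caps the number of covering balls by $Cr^{n-1}/R^{n-1}$, so the total volume is controlled by $Cr^{n-1}$.

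\textbf{Lower bound on $\ch^{n-1}(\pa^*I_0\cap B_r)$.} I would use a projection/slicing argument. Pick a direction $e\in S^{n-1}$ such that on the two opposite caps of $B_r$ in direction $e$ the map $u$ takes values in neighborhoods of two distinct phases $a_i\ne a_j$; such a direction exists for all large $r$ because $u$ is a genuine multi-phase minimizer with $I_{\gamma_0}\ne\varnothing$ (and in particular is not identically a single constant). For almost every chord $\ell$ parallel to $e$ joining the two caps, $u|_\ell$ is continuous and must transition from a neighborhood of $a_i$ to one of $a_j$, so $\ell\cap\pa I_0\ne\varnothing$. The orthogonal projection of $\pa^*I_0\cap B_r$ onto $e^\perp$ therefore covers a set of $(n-1)$-measure $\ge cr^{n-1}$, and since projections do not increase Hausdorff $(n-1)$-measure, \eqref{free bdy lower bdd} follows. (Preliminarily one needs $I_0$ to have locally finite perimeter, which is already an output of the free-boundary regularity from \cite{agz}.)

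\textbf{Upper bound on $\ch^{n-1}(\pa^*I_0\cap B_r)$ when $\al=1$; main obstacle.} In this regime $W_u(u)\sim g_i(u)\,(u-a_i)/|u-a_i|$ is uniformly bounded near each zero of $W$, so $\D u=W_u(u)\chi_{I_0}$ has a bounded right-hand side and $u\in C^{1,1}_{loc}$ by the vector obstacle regularity theory of \cite{agz}. Since $u\equiv a_i$ locally on each component of $I_0^c$, $\na u=0$ there; by $C^{1,1}$ continuity of $\na u$ and the obstacle nondegeneracy, the normal derivative of $u$ on $\pa I_0$ is bounded from below, producing a BV-type estimate of the form
\[
\ch^{n-1}(\pa^*I_0\cap B_r)\le C\int_{B_{r+1}}\bigl(|\na u|^2+W(u)\bigr)\,dx\le Cr^{n-1},
\]
the last step by the linear energy bound. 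The main obstacle throughout is that the energetic $W$-bound degenerates as $\d\downarrow 0$, so Step 1 genuinely relies on the obstacle nondegeneracy of \cite{agz}; this is even more pronounced in Step 3, where the $C^{1,1}$ regularity is specific to $\al=1$ (for $\al<1$, $W_u$ blows up at the free boundary; for $1<\al<2$, $\na u$ vanishes there and the BV estimate degenerates).
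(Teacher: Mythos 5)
Your first part (the upper bound on $\cl^n(I_0\cap B_r)$) is workable in outline: combining the nondegeneracy lemma with the linear energy bound via a covering or partition argument is exactly the right idea, and your Vitali version packages it differently from the paper's cube decomposition into the classes $T_1$--$T_5$. However, the intermediate claim ``$\d(x)\ge c\,\dist(x,\pa I_0)^{2/(2-\al)}$'' is not what the paper's nondegeneracy Lemma \ref{nondeg} says and is not true pointwise: the lemma bounds $\sup_{B_r(x_0)}|u-a_i|$ from below when the ball stays near $a_i$, which controls oscillation over a ball, not $\d$ at a point; in fact for $\al=1$ the growth estimate of Proposition \ref{prop growth} gives the \emph{reverse} pointwise inequality $\d(x)\le C\dist(x,\pa I_0)^2$. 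Your subsequent Vitali counting does not actually use the bad form of the claim, so the strategy can be repaired: the correct justification is that any ball $B_R(x)$ of fixed radius centered at $x\in I_0$ carries a definite amount of energy, either because $u$ oscillates by at least $\min(\g_0,\th-\g_0)$ across $B_R(x)$ (nondegeneracy if $B_R(x)\subset\{|u-a_i|<\th\}$, trivially otherwise) or because $W(u)\ge c_0(\g_0)$ on a set of definite measure. This is exactly the dichotomy the paper encodes in $T_2,T_3,T_5$ versus $T_4$.

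The second part has a genuine gap. You assert that ``such a direction $e$ exists for all large $r$ because $u$ is a genuine multi-phase minimizer,'' but that is exactly what must be proved. The paper devotes Lemma \ref{lemma ci(th)} and Theorem \ref{two phase existence} (two-phase existence, which in turn uses the density estimate and the cube decomposition from the first part) to show that $\cl^n(B_r\cap\{u=a_i\})\ge cr^n$ and $\cl^n(B_r\cap\{u=a_j\})\ge cr^n$ for two distinct phases, and only then applies the relative isoperimetric inequality to get the perimeter lower bound. Your projection/slicing argument needs the same quantitative input — that both contact sets occupy a definite volume fraction of $B_r$ — to produce an $(n-1)$-dimensional set of chords crossing $\pa I_0$; $I_{\g_0}\ne\varnothing$ alone gives no volume lower bound at scale $r$.

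The third part ($\al=1$ upper bound) rests on a step the paper explicitly flags as open. You invoke $u\in C^{1,1}_{loc}$, but Remark \ref{c1,alpha remark} states that for $\al=1$ in the \emph{vector} case it is not known whether $u\in C^{1,1}_{loc}$; only $u\in C^{1,\g}_{loc}$ for all $\g<1$ is available. Moreover, the claim that ``the normal derivative of $u$ on $\pa I_0$ is bounded from below'' contradicts the paper's observation (after \eqref{notations}) that $|u|=|\na u|=0$ at every point of $\G(u)=\pa^*\Om(u)$: the normal derivative vanishes there, it is not bounded away from zero. Your proposed BV-type estimate does not survive these corrections. The paper's argument is of a different nature: it derives the \emph{local} bound $\ch^{n-1}(\G(u)\cap B_{r_0}(z))\le C_0$ via a second-derivative integral identity for $v_i=\pa_{x_i}u$ (proving $\int_{\S_\e}|\na|v_i||^2\le C\e$), combines it with the compactness lower bound $\sum_i\int_{B_\e(z)\cap\Om(u)}|\na|v_i||^2\ge C\cl^n(B_\e(z))$ obtained from the Weiss monotonicity formula and the quadratic growth estimate, and closes with a Besicovitch covering; the global bound then follows because only the $O(k^{n-1})$ sub-cubes outside the class $T_4$ can meet $\pa^*I_0$, each contributing a bounded perimeter.
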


%\begin{rmk}
%The first inequality \eqref{measure of I_0} is established for the case $\al=0$ in \cite[Proposition 3]{agz}.
%\end{rmk}

The analogs of the ``minimal cone" solutions in $\BR^2$ and $\BR^3$ mentioned above as well as the cylindrical triple junction cone in $\BR^3$  have been shown to exist also for $0\leq \al<2$ and possess free boundary (see \cite[Theorem 1 and Proposition 4]{agz})\footnote{For $\al=0$, $J(u)=\int\f12|\na u|^2+\chi_{\{u\in S_A\}}$, where $S_A$ denotes the interior of the convex hull of $A=\{a_1,...,a_N\}$. It is the vectorial analog of the Alt-Caffarelli functional. }. Combining Theorem \ref{main} above with that result we obtain the following
\begin{corol}\label{corol with symmetry}
Under H1 and H2, $0\leq \al <2$, for the equivariant triangle junction in $\BR^2$ and the equivariant quadruple junction in $\BR^3$ the estimates \eqref{measure of I_0}, \eqref{free bdy lower bdd} and \eqref{est free bdy} hold for any $r\geq r_0>0$.
 \end{corol}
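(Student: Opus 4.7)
The strategy is to apply Theorem \ref{main} directly to the symmetric junctions produced in \cite{agz}. For this I would verify four facts about those particular solutions: (i) the ambient potentials $W$ used in \cite{agz} are chosen so that H1 and H2 hold; (ii) the junctions are bounded, which is established in \cite{agz} via the sub-quadratic structure together with a comparison argument on the fundamental domain; (iii) $I_{\g_0} \neq \varnothing$, which is automatic because the junction interpolates among $\hat N \geq 3$ distinct phases and therefore cannot globally lie within a $\g_0$-neighborhood of any single $a_i$; and (iv) the equivariant minimizers are in fact entire minimizers in the sense of \eqref{def of minimizer}, i.e., \eqref{def of minimizer} holds against arbitrary, not merely equivariant, compactly supported perturbations.

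Point (iv) is the delicate structural ingredient. If $W$ were convex along group orbits, the standard group-averaging map $\pi_G v := |G|^{-1}\sum_{g\in G} g^{-1} v(g\cdot)$ would send any competitor to an equivariant one with the same boundary data and no larger energy, and combined with equivariant minimality this would yield full minimality. In the present multi-well setting $W$ is genuinely nonconvex and this direct argument fails; however, for the symmetric junctions of \cite{agz} full minimality is obtained by the direct free-boundary analysis carried out there (one compares against sectorial and radial modifications that respect the phase partition rather than against a symmetrized competitor). I would therefore cite \cite{agz} for (ii) and (iv), then invoke Theorem \ref{main} to conclude \eqref{measure of I_0}--\eqref{est free bdy} for every $\al \in (0,2)$, with \eqref{est free bdy} being meaningful in the sub-range $\al = 1$ as in the theorem.

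The main obstacle is the endpoint $\al = 0$, where H1--H2 degenerate and the functional becomes the vectorial Alt--Caffarelli energy $\int \big(\f12 |\na v|^2 + \chi_{\{v \in S_A\}}\big)\,dx$, so Theorem \ref{main} does not apply as stated. My plan for this case is by approximation: pick $\al_k \searrow 0$ together with corresponding equivariant minimizers $u_k$, so that each $u_k$ satisfies \eqref{measure of I_0}--\eqref{est free bdy} with constants $c(\al_k), c_1(\al_k), c_2(\al_k)$. The technical heart of the extension, and the part I expect to be hardest, is to show that these constants can be chosen uniform in $k$; this requires going back through the proof of Theorem \ref{main} and checking that the $\al$-dependence enters only through the lower bounds $c_i$ in H2 and structural constants that remain controlled as $\al \to 0^+$. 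Once uniformity is in place, I would pass to the limit using $L^1_{loc}$ compactness of $\{u_k\}$, together with lower/upper semicontinuity of the Hausdorff $(n-1)$-measure of reduced boundaries of sets of finite perimeter, to transfer the bounds to the limiting $\al=0$ junction, whose uniqueness within the equivariance class identifies it with the minimizer from \cite{agz}.
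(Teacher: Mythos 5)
Your proposal treats the corollary as a direct application of Theorem \ref{main}, which requires showing that the equivariant junctions of \cite{agz} are \emph{full} entire minimizers in the sense of \eqref{def of minimizer} (your point (iv)). You correctly flag this as the delicate step, but then assert that \cite{agz} establishes full minimality ``by the direct free-boundary analysis carried out there.'' That claim is unsupported and appears to contradict the paper's own framing: the text introducing the junctions of \cite{bgs,gs} explicitly states that \eqref{def of minimizer} is ``verified only in their respective equivariance classes,'' and \cite{agz} is presented as an extension of those constructions to sub-quadratic potentials, not as an upgrade to full minimality. If you cannot cite a specific result in \cite{agz} that removes the equivariance restriction, your argument does not reach the hypotheses of Theorem \ref{main} and the reduction is incomplete. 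The route the paper has in mind is more likely the observation that the machinery of Theorem \ref{main} actually runs on equivariant minimizers: the basic estimate \eqref{basic est} follows from equivariant comparison, the non-degeneracy Lemma \ref{nondeg} uses only the Euler--Lagrange equation (which holds by symmetric criticality), and the cube-counting argument in Theorem \ref{first part main theorem} only needs the energy bound on large balls rather than local minimality.

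There is a second, independent gap. The corollary claims all three estimates \eqref{measure of I_0}, \eqref{free bdy lower bdd}, \eqref{est free bdy} for the entire range $0\leq\al<2$, whereas Theorem \ref{main} furnishes the upper bound \eqref{est free bdy} only for $\al=1$. You acknowledge this (``with \eqref{est free bdy} being meaningful in the sub-range $\al=1$'') but then simply do not prove the upper bound outside $\al=1$, so the corollary is not fully established by your argument. The extra input here must come from the explicit cone structure of the junctions: for the triod in $\BR^2$, the tetrahedral cone in $\BR^3$, and the cylindrical triple junction, the free boundary $\pa^* I_0$ is a union of smooth cones whose $(n-1)$-measure in $B_r$ scales exactly like $r^{n-1}$, which yields \eqref{est free bdy} for every $\al\in[0,2)$ and also explains why $r_0$ may be taken arbitrarily small. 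This same observation handles $\al=0$ directly, making your rather heavy approximation scheme (uniformity of constants as $\al_k\searrow 0$, $L^1_{loc}$ compactness, semicontinuity of $\ch^{n-1}$ of reduced boundaries) unnecessary. In short: your reduction route relies on an unsubstantiated minimality claim, and even granting it, you recover the corollary only on the sub-range $\al\in(0,2)$ with the upper bound restricted to $\al=1$.
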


\begin{Remark}
We note that the assumption on the existence of minimizers as above is not restrictive as such nonconstant entire minimizers $U:\BR^1\ri\BR^m$ have been shown to exist in \cite{ms} under the hypothesis of continuity on $W$ together with the mild condition
\beq\label{mild condition}
\sqrt{W(u)}\geq f(|u|),\text{ for some nonnegative }f:(0,+\infty)\ri\BR\text{ s.t. }\int_0^{+\infty}f(r)\,dr=+\infty,
\eeq
that allows even decay to zero of the potential at infinity.

A more convenient sufficient condition for the existence of such nonconstant entire minimizers (called \emph{connections}) is (see \cite{fgn})
\beq\label{condition at infty}
\liminf\limits_{|u|\ri\infty} W(u)>0.
\eeq
\end{Remark}

In \cite{cc}, for the scalar two phase problem, the entire range of potentials $F_0=\chi_{\{|u|<1\}}$, $F(u)=(1-u^2)^{\al/2}$ ($0<\al\leq 1$) was already introduced. As $\al\ri 0$, the minimizers get increasingly localized. In particular for $\al=0$ the connections are affine functions. For these reasons we expect the construction of entire minimizers mentioned in Corollary \ref{corol with symmetry} above, under no symmetry hypotheses, to be more accessible for singular potentials.

From the point of view of regularity, our problem can be regarded as a generalization of the more simplified model which studies the minimization problem of the functional
\beq\label{simplified model}
\int_{\Om}\left( \f12|\na u|^2+|u|^\al\right)\,dx,\quad u:\BR^n\ri\BR^m, \; \al\in(0,2).
\eeq
In the scalar case, i.e. $m=1$, one recovers the two phase free boundary problem
\beqo
\D u= \al(u^+)^{\al-1}-\al(u^-)^{\al-1}.
\eeqo
which has been extensively studied under various conditions and settings, see for example \cite{fs} for the case $\al\in(1,2)$ and \cite{lp} for the 2D problem with $\al\in (0,1)$. One can also refer to \cite{lqt,ls} for the optimal regularity theory for a functional with a more general form of the potential $W$. When $\al=1$, the problem becomes the so-called two-phase obstacle-type problem. The regularity of the solution as well as the free boundary regularity has been summarised in detail in \cite{book-psu}.

For the vector-valued case, i.e. $m\geq 2$, the minimization problem \eqref{simplified model} was investigated in \cite{asuw} for $\al=1$ and \cite{fsw} for $\al\in(1,2)$, where the authors studied the regularity of the minimizers and the asymptotic behavior for the minimizer near the ``regular" point of the free boundary. Also we would like to mention the works \cite{csy,mtv} studying the vector-valued Bernoulli free boundary problem, where the potential function $W(u)$ takes the form $Q^2(x)\chi_{\{|u|>0\}}$. Such a problem is quite close to the $\al=0$ case of the functional \eqref{simplified model}. All these works mainly focus on the behavior and regularity of the free boundary.%, which are different from the emphasis of our paper.

The biggest difference between our functional \eqref{AC functional} and the simplified one \eqref{simplified model} is that the potential $W(u)$ possesses more than one global minimum. Furthermore, our emphasis is not so much on the local behavior of the free boundary, but on its asymptotic behavior on the large domain $B_R(0^n)$ as $R\ri \infty$.% In particular, when $A=\{a_1,a_2\}$ only has two minimal points,
%We demonstrate the existence of the minimizer that contains at least two different phases (i.e. the set $\{u=a_i\}$ has some definite power for some $a_1,a_2\in A$) and therefore possesses the free boundary $\pa\{|u-a_i|>0\}$.
We note that usually the existence of the free boundary is forced by the Dirichlet boundary condition given on $\pa\Om$. However, in this paper we do not assume any boundary condition and the existence of the free boundary results from the assumption that $u$ is an entire bounded non-constant minimizer. % Moreover, we also give quantitative estimates for the $\cl^n$ measure of the ``contact set" $\{u=a_i\}$ and the $\ch^{n-1}$ measure of the free boundary. Our main results can be summarized in the following theorem.
%\begin{theorem}
%Assume $A=\{a_1,a_2\}$. Let $u: \BR^n\ri\BR^m$ be a bounded entire minimizer of the energy $J$, and $u\not\equiv a_i$ for $i=1,2$. Then there exist a radius $R_0$ and positive constants $c_1,c_2,C_1,C_2$, such that for any $R\geq R_0$,
%\beqo
%\cl^n(\{u=a_i\}\cap B_{R}(0^n))\geq c_iR^n,\quad i=1,2.
%\eeqo
%In the case $\al=1$, we further have the estimate for the perimeter of the free boundary,
%\beqo
%\ch^{n-1}(\pa^*\{|u-a_i|>0\}\cap B_R(0^n))\leq C_iR^{n-1},\quad i=1,2.
%\eeqo
%where $\pa^*$ denotes the De Giorgi reduced boundary.
%\end{theorem}

To prove the main Theorem \ref{main}, we first show in Section \ref{section 2} that the Euler-Lagrange equation is satisfied by the minimizer $u$ via the regularity of the solution. Then using the regularity property and a non-degeneracy lemma, we prove the first part of the theorem (upper bound estimate for the $\cl^n$ measure of $I_0$) by introducing a method of dividing $B_R$ into identical smaller sub-cubes and classifying all the cubes according to how much measure of the ``contact set" they contain. This is done in Theorem \ref{first part main theorem} from Section \ref{section 3}. We also demonstrate in Theorem \ref{two phase existence} the coexistence of at least two phases for the minimizer at each large scale. In Section \ref{growth est} we derive a Weiss-type formula and then give a growth estimate for the minimizer near the free boundary. These results from Section \ref{growth est} are used in Section \ref{section 5}, where we estimate the $\mathcal{H}^{n-1}$ measure of the free boundary when $\al=1$ and prove the second part of Theorem \ref{main}. Note that currently our method of estimating the $\ch^{n-1}$ measure of the free boundary from above cannot be generalized for arbitrary $\al\in (0,2)$, and we hope to solve this problem in the future.

\section{Regularity of the minimizer}\label{section 2}

We first prove the optimal regularity of the entire minimizer $u$ for $\al\in(0,1)$.

\begin{proposition}\label{reg 0<al<=1}
Suppose $0<\al<1$ and let $u$ be an entire minimizer of the energy functional $J$ satisfying $|u(x)|\leq M$. Assuming H1, H2. Then we have
\beqo
u\in C_{loc}^{1,\beta}(\BR^n,\,\BR^m),
\eeqo
where $\beta$ is a constant defined by $\beta=\f{\al}{2-\al}$. In particular, this $C^{1,\b}$ regularity is sharp.
\end{proposition}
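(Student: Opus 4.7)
The plan is to combine optimal growth at the free boundary with interior elliptic regularity in the non-coincidence region, and then deduce $C^{1,\beta}$ from these two ingredients by a standard dichotomy on the distance to the free boundary. Throughout, the singularity of $W_u(u)\sim |u-a_i|^{\alpha-1}$ near the zeros of $W$ forces the exponent $\beta=\alpha/(2-\alpha)$ through the natural scaling $u \mapsto \lambda^{-2/(2-\alpha)} u(\lambda\,\cdot)$ under which the equation and the energy are both invariant.

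First, I would verify that $u$ solves the Euler--Lagrange system in the classical sense on the open set $\{u\notin A\}$ and in a suitable weak/variational sense on all of $\BR^n$: away from the zeros of $W$, $W_u\in C^1$, so Schauder/$L^p$ theory applies directly and yields $C^{2,\alpha}_{\mathrm{loc}}$ on $\{u\notin A\}$. Standard De Giorgi--Nash--Moser type arguments for bounded minimizers, plus the bound $|u|\le M$, also give local Lipschitz continuity of $u$ on $\BR^n$. Next, I would establish the optimal growth estimate at the free boundary:
\beq\label{eqn:optgrowth}
\sup_{B_r(x_0)}|u-a_i|\leq C\,r^{2/(2-\alpha)}, \qquad x_0\in\pa\{u=a_i\},\ r\leq r_\ast,
\eeq
where $C$ depends only on $M$, $\alpha$ and $g_i$. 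I would prove \eqref{eqn:optgrowth} by a blow-up/contradiction argument in the spirit of obstacle-type problems: if the bound failed along some sequence $r_k\downarrow 0$, the rescalings $v_k(y):=r_k^{-2/(2-\alpha)}(u(x_0+r_k y)-a_i)$ would be minimizers of rescaled functionals, would satisfy $\sup_{B_1}|v_k|\to\infty$, and after renormalization would converge (by the Lipschitz/energy bounds) to a nontrivial entire minimizer of the $\alpha$-homogeneous model functional with a free boundary point at the origin, violating a non-degeneracy / strict positivity lower bound. Alternatively, \eqref{eqn:optgrowth} can be obtained by a radial ODE super-solution argument applied to the scalar quantity $|u-a_i|^2$, using that H2 forces a differential inequality of the form $\Delta|u-a_i|^2\geq c|u-a_i|^\alpha$ on $B_{r_0}(a_i)$.

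Having \eqref{eqn:optgrowth}, I would upgrade it to a pointwise gradient estimate. For $x\in\BR^n\setminus\{u\in A\}$ let $d(x):=\mathrm{dist}(x,\{u\in A\})$ and pick $x_0\in\{u\in A\}$ achieving the distance. On $B_{d(x)/2}(x)$ the solution avoids $A$, and the rescaled map
\beqo
v(y)=d(x)^{-2/(2-\alpha)}\bigl(u(x+d(x)y)-a_i\bigr)
\eeqo
solves an equation with uniformly smooth right-hand side (thanks to \eqref{eqn:optgrowth} one has $|v|\leq C$ on $B_{1/2}$). Standard interior Schauder estimates then give $|\na v(0)|\leq C$, which upon unscaling yields
\beq\label{eqn:gradbd}
|\na u(x)|\leq C\,d(x)^{\alpha/(2-\alpha)}=C\,d(x)^{\beta}.
\eeq
The final step is a classical dichotomy: for $x,y\in B_R$ with $|x-y|=\rho$, either $\min(d(x),d(y))\geq 2\rho$, in which case interior Schauder on a ball disjoint from the coincidence set and the scaling $v$ above yield $|\na u(x)-\na u(y)|\leq C\rho^{\beta}$; or $\min(d(x),d(y))\leq 2\rho$, in which case \eqref{eqn:gradbd} together with $\na u=0$ on the interior of $\{u\in A\}$ (a consequence of \eqref{eqn:optgrowth}) gives $|\na u(x)|+|\na u(y)|\leq C\rho^{\beta}$. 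Combining, $u\in C^{1,\beta}_{\mathrm{loc}}$. Sharpness is verified by exhibiting the explicit $1$-D minimizer $U(t)=c_\alpha|t-t_0|^{2/(2-\alpha)}$ that solves the scalar Euler--Lagrange equation with $W(u)=|u|^{\alpha}$: its derivative is exactly $C^{0,\beta}$ and not better at $t_0$.

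The main obstacle I anticipate is the growth estimate \eqref{eqn:optgrowth}. In the scalar obstacle/cavity literature this usually rests on a maximum principle or on the Alt--Caffarelli--Friedman monotonicity, neither of which transfers directly to systems with $m\ge 2$. I would therefore lean on the variational (minimizer) character of $u$ and H2, which forces the scalar inequality for $|u-a_i|^2$ mentioned above, as the robust replacement for a comparison principle; the non-degeneracy needed in the blow-up alternative must be extracted directly from minimality, using a competitor that replaces $u$ by the nearest phase $a_i$ on a small ball.
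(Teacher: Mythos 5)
Your proposal takes a genuinely different, and considerably harder, route than the paper's. The paper's argument is a direct Campanato iteration: compare $u$ with its harmonic replacement $v$ on $B_R$, use minimality to get $\int_{B_R}|\nabla u-\nabla v|^2\leq 2\int_{B_R}(W(v)-W(u))$, invoke the elementary observation that for $0<\alpha<1$ the map $t\mapsto t^\alpha$ is $\alpha$-H\"older so $W(v)-W(u)\leq C|u-v|^\alpha$, and then H\"older--Poincar\'e--Young turn this into $\int_{B_R}|\nabla u-\nabla v|^2\leq CR^{n+2\beta}$; standard iteration then yields $C^{1,\beta}$. Nowhere does the paper need a growth estimate at the free boundary, a blow-up, or even the Euler--Lagrange system.

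Your route hinges instead on first proving the pointwise growth bound $\sup_{B_r(x_0)}|u-a_i|\leq Cr^{2/(2-\alpha)}$ at free boundary points, and this is where the gap lies. In the paper that estimate (Proposition~\ref{prop growth}) is proved only for $\alpha=1$, via the Weiss formula together with a Liouville-type classification of blow-up limits by homogeneous harmonic polynomials of degree two; the paper explicitly restricts Section~4 to $\alpha=1$. Neither of your two alternatives closes the gap for $0<\alpha<1$. The blow-up needs a priori compactness of $v_k(y)=r_k^{-2/(2-\alpha)}(u(x_0+r_ky)-a_i)$, but a Lipschitz bound only gives $|v_k(y)|\leq Lr_k^{1-2/(2-\alpha)}|y|$, which diverges as $r_k\to0$ since $2/(2-\alpha)>1$; any $C^{1,\gamma}$ estimate strong enough to restore compactness is essentially the result you are trying to prove. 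Moreover, the Liouville step would require classifying $\kappa$-homogeneous harmonic limits with the non-integer exponent $\kappa=2/(2-\alpha)$, a different problem from Weiss' degree-two case. The inequality $\Delta|u-a_i|^2\geq c|u-a_i|^\alpha$ is correct but points the wrong way: it yields the non-degeneracy lower bound (Lemma~\ref{nondeg}), not an upper bound, and there is no comparison principle for the vector system to convert it into a supremum estimate. Finally, even granting the growth bound, your rescaled equation on $B_{1/2}$ has right-hand side of order $|v|^{\alpha-1}$, which is unbounded where $|v|$ is small, so Schauder does not apply without an extra argument controlling $\inf|v|$.

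For sharpness, the explicit $1$-D profile $U(t)=c_\alpha|t-t_0|^{2/(2-\alpha)}$ illustrates why the exponent cannot be improved for the model potential $|u|^\alpha$, but it is not a minimizer of the functional under H1--H2 (which has $N\geq2$ wells and a variable prefactor $g_i$), so it does not by itself show that the specific $u$ of the proposition is no better than $C^{1,\beta}$. The paper instead proves sharpness for $u$ itself: it uses Theorem~\ref{first part main theorem} to locate an interior point of $\{u=a_i\}$, picks a free boundary point $x_2$ on the maximal inscribed sphere, shows $\nabla u(x_2)=0$ by differentiating along the sphere, and invokes the non-degeneracy Lemma~\ref{nondeg} to get $\sup_{B_r(x_2)}|u-a_i|\geq cr^{1+\beta}$, which is incompatible with any H\"older gradient exponent larger than $\beta$ at $x_2$.
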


\begin{rmk}
We would like to note that in the following proof, unless being specifically stated, all the constants denoted by $C$ only depend on the upper bound $M$, the potential function $W$ and the dimensions $m,\, n$.
\end{rmk}

\begin{proof}
For any ball $B_R(x)\subset \BR^n$, let $v$ be the harmonic function (which means each $v_i$ is harmonic) in $B_R(x)$ such that $u=v$ on $\pa B_R$. Since $v$ is harmonic, then $\na v$ satisfies a Campanato type growth condition
\beq\label{campanato}
\int_{B_\rho}|\na v-(\na v)_\rho|^2\,dx\leq C(\f{\r}{R})^{n+2}\int_{B_R}|\na v-(\na v)_R|^2\,dx
\eeq
%for some constant $C$ only depending on $M$ and dimension $n,\,m$.

For $\r<R$, we deduce
\beq\label{decay of nabla u}
\begin{split}
\int_{B_\r}|\na u-(\na u)_\r|^2\,dx&\leq C\int_{B_\r}|\na v-(\na v)_\r|^2\,dx+C\int_{B_R}|\na u-\na v|^2\,dx\\
&\leq C(\f{\r}{R})^{n+2}\int_{B_R}|\na v-(\na v)_R|^2\,dx+C\int_{B_R}|\na u-\na v|^2\,dx.%\\
%&\leq c(\f{\r}{R})^{n+2}\int_{B_R}|\na u-(\na u)_R|^2\,dx+c\int_{B_R}\left\{\prod\limits_{k=1}^N|v-a_k|^\al-\prod\limits_{k=1}^N|u-a_k|^\al\right\}\,dx
\end{split}
\eeq
For the second term, we use the minimization property to obtain
\beq\label{mini}
\int_{B_R}|\na u-\na v|^2\,dx= \int_{B_R}\left(|\na u|^2-|\na v|^2\right)\,dx
\leq\int_{B_R}2\left(W(v)-W(u)\right)\,dx.
\eeq

By hypothesises H1, H2 and $\|u\|_{L^\infty}\leq M$, we can easily verify that $W$ can be written as
\beq\label{formula for W}
W(u)=\left(\prod\limits_{i=1}^N |u-a_i|^\al \right)g(u),
\eeq
where $g$ is a function such that
\beq\label{condition on g}
g\in C^2(\overline{B}_M), \quad g(u)\geq C \text{ for some constant }C>0.
\eeq

When $\al\in(0,1)$, since both $v$ and $u$ are uniformly bounded, we compute
\beq\label{diff w alpha}
\begin{split}
& W(v)-W(u)\\
=&\left(\prod\limits_{i=1}^N|v-a_i|^\al\right) g(v)-\left(\prod\limits_{i=1}^N|u-a_i|^\al\right) g(u)\\
\leq&\left(\prod\limits_{i=1}^N(|u-a_i|^\al+|u-v|^\al)\right)\left(g(u)+C|u-v|\right)-\left(\prod\limits_{i=1}^N|u-a_i|^\al\right) g(u)\\
\leq & C|u-v|^\al,
\end{split}
\eeq
where $C$ is a constant only depending on $W, M, n, m$.
By H\"{o}lder, Poincar\'{e} and Young, we have
\begin{align}
\non&\int_{B_R}|u-v|^\al\,dx \\
\non\leq & C(\int_{B_R} |u-v|^2\,dx)^{\f{\al}{2}} R^{n(1-\f{\al}{2})}\\
\label{thirdline}\leq & C(\int_{B_R}|\na(u-v)|^2\,dx)^{\f{\al}{2}}R^{n(1-\f{\al}{2})+\al}\\
\non\leq& \delta \int_{B_R}|\na(u-v)|^2\,dx+C(\delta)R^{n+\f{2\al}{2-\al}} %\quad \text{(This is the modification)}
\end{align}
where $\delta$ is a suitably chosen small number. Combining \eqref{mini}, \eqref{diff w alpha} and \eqref{thirdline}, we obtain
\beq\label{decay of nabla u-v}
\int_{B_R}|\na u-\na v|^2\,dx\leq CR^{n+2\beta},\quad \beta:=\f{\al}{2-\al}
\eeq
Therefore from \eqref{campanato}, \eqref{decay of nabla u} and \eqref{decay of nabla u-v} we get the following Campanato type decay estimate for the minimizer $u$
\beq\label{ucampanato}
\int_{B_\rho}|\na u-(\na u)_\r|^2\,dx\leq c(\f{\r}{R})^{n+2}\int_{B_R}|\na u-(\na u)_R|^2\,dx+CR^{n+2\beta}
\eeq
By a standard iteration argument we conclude that there exists a constant $C$ such that
\beq\label{decayrate}
\int_{B_\rho}|\na u-(\na u)_\r|^2\,dx\leq C\r^{n+2\beta},
\eeq
which by the Morrey-Campanato theory implies $u\in C^{1,\beta}$.

%Comment: Here the argument works for $0<\al<1$, but fails for the critical case $\al=1$. We cannot directly get $\int_{B_\rho}|\na u-(\na u)_\r|^2\,dx\leq C\r^{n+2}$.

To show that this $C^{1,\beta}$ estimate is sharp, we need some results that will be proved later. Firstly by Theorem \ref{first part main theorem} and its proof, we know there exists $a_i\in A$ such that $\cl^n(\{u(x)=a_i\})>0$ and $\{u(x)=a_i\}$ contains interior points. Let $x_1\in\{u(x)=a_i\}$ be an interior point, and define
\beqo
r_1:=\sup \{r>0:  B_r(x_1)\subset \{u(x)=a_i\}\}.
\eeqo
When $u\not\equiv a_i$, we know that $r_1\in(0,\infty)$ and there is a point $x_2\in \pa B_{r_1}(x_1)$ such that $u(x_2)=a_i$ and $x_2\in \overline{\{|u-a_i|>0\}}$. For $r$ sufficiently small, by Lemma \ref{nondeg} we have
\beq\label{nondeg optimal reg}
\sup\limits_{B_r(x_2)}|u-a_i|\geq cr^{1+\beta}.
\eeq
Also we claim that $\na u(x_2)=0$. Firstly, by $u(x)\equiv a_i$ for any $x\in B_r(x_2)\cap B_{r_1}(x_1)$, we can easily check that $\pa_\nu u(x_2)=0$ where $\nu$ denotes the normal vector on $\pa B_{r_1}(x_1)$. On the other hand, take any unit vector $\mu$ which points to one of the tangential directions at $x_2$ on $\pa B_{r_1}(x_1)$. For any $|t|\leq r_1$,
\beqo
x_2+t\mu+(\sqrt{r_1^2-t^2}-r_1)\nu\in\pa B_{r_1}(x_1),\quad u(x_2+t\mu+(\sqrt{r_1^2-t^2}-r_1)\nu)=a_i.
\eeqo
Thus for any $i\in\{1,2,...,m\}$ we have
\beqo
0=\f{d}{dt}\bigg|_{t=0}u_i(x_2+t\mu+(\sqrt{r_1^2-t^2}-r_1)\nu)=\na u_i(x_2)\cdot \mu.
\eeqo
Since $\mu$ is an arbitrary tangential direction, the claim $\na u(x_2)=0$ is proved.

Finally, the vanishing of $\na u(x_2)$ together with \eqref{nondeg optimal reg} imply the sharpness of the $C^{1,\beta}$ regularity.

\end{proof}

%\begin{rmk}
%In particular, when $\al=1$ we have the $C^{1,1}$ regularity, which can be regarded as a vector-valued generalization of the optimal $C^{1,1}$ regularity for the two-phase obstacle problem in scalar case. However, the method here are much more simplified. Readers can refer to \cite[Chapter 2.3]{book-psu} for more details.
%\end{rmk}

\begin{rmk}\label{c1,alpha remark}
For $1\leq \al<2$, by exactly the same argument we can also prove $u\in C^{1,\g}_{loc}$ for any $\g\in(0,1)$. Just notice that we need to replace \eqref{diff w alpha} with $W(u)-W(v)\leq C|u-v|$. When $\al=1$, in contrast to the scalar case problem (see \cite{sha}), it is still open whether $u\in C^{1,1}_{loc}$. %Later we will improve this to $C^{2,\al-1}_{loc}$ in Proposition \ref{reg al>1} for $\al>1$.
\end{rmk}

Now with the regularity result we can identify the Euler-Lagrange equation for the entire minimizer $u$ in the following lemma.

\begin{lemma}\label{EL}
Let $u$ be an entire minimizer of the functional $J$ satisfying $|u(x)|\leq M$. Assume H1, H2. Then we have

\begin{enumerate}
\itemsep0.5em
\item $1<\al<2$, $u$ is a strong solution of
\beq\label{eleq for 1<al<2}
\D u=W_u(u),\qquad \forall x\in\BR^n.
\eeq
\item $\al=1$, $u$ is a strong solution of
\beq\label{eleq for al=1}
\D u=W_u(u)\chi_{\{d(u,A)>0\}}.
\eeq
\item $0<\al<1$, in the open set $\{x: d(u(x),A)>0\}$, $u$ solves the equation \eqref{eleq for 1<al<2}.
\end{enumerate}
Here $W_u(u)$ denotes the derivative of $W$ with respect to $u$, $\chi$ is the characteristic function and $d(x,A):=\dist(x,A)$.

\end{lemma}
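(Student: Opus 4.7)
The plan is to derive the Euler--Lagrange equation from minimality by perturbing $u$ along compactly supported directions $\varphi$ and then invoke the regularity of Proposition \ref{reg 0<al<=1} and Remark \ref{c1,alpha remark} together with a Schauder or $W^{2,p}$ bootstrap to pass from the weak form to a strong solution. The three cases differ only in the regularity of $W$ near the zero set $A$: a direct computation from H2 gives $W\in C^1(\BR^m)$ with $W_u(a_i)=0$ when $\alpha>1$; for $\alpha=1$, $W$ is only Lipschitz and admits the one-sided directional derivative $|\varphi|g_i(a_i)$ at $u=a_i$; for $\alpha<1$, $W_u$ blows up as $u\to a_i$.

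For Case (1), $W_u$ is continuous and locally bounded, and for any $\varphi\in C^\infty_c(\BR^n,\BR^m)$ the difference quotient $(W(u+t\varphi)-W(u))/t$ is uniformly dominated by $C|\varphi|$ on the relevant compact set, so dominated convergence justifies differentiating $J(u+t\varphi)$ at $t=0$; the resulting derivative vanishes by minimality and gives the weak form of \eqref{eleq for 1<al<2}. A Schauder bootstrap, applicable because $u\in C^{1,\gamma}_{loc}$ and $W_u$ is H\"older continuous, upgrades this to a classical solution on $\BR^n$. For Case (3) the same argument works only for test functions supported in the open set $\{d(u,A)>0\}$: for such $\varphi$ and sufficiently small $|t|$, $u+t\varphi$ stays at positive distance from $A$ along the whole path and $W$ is smooth there, which gives \eqref{eleq for 1<al<2} on this open set.

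Case (2) is the delicate one. The idea is to replace two-sided differentiation by one-sided perturbations: sending $t\to 0^+$ and $t\to 0^-$ in $J(u+t\varphi)\geq J(u)$ produces two inequalities which combine into
\beqo
\left|\int_{\BR^n}\nabla u\cdot\nabla\varphi\,dx+\int_{\{d(u,A)>0\}}W_u(u)\cdot\varphi\,dx\right|\leq C\int_{\{u\in A\}}|\varphi|\,dx,
\eeqo
i.e.\ the distribution $\mu:=-\Delta u-W_u(u)\chi_{\{d(u,A)>0\}}$ is a bounded function supported in $\{u\in A\}$. In particular $\Delta u\in L^\infty_{loc}$, so Calder\'on--Zygmund gives $u\in W^{2,p}_{loc}$ for every $p<\infty$. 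It remains to show $\mu=0$ a.e., and since $\mu\equiv 0$ on $\{d(u,A)>0\}$, this reduces to $\Delta u=0$ a.e.\ on the contact set. For this I would apply the Stampacchia lemma for Sobolev functions twice: first to the components of $u$, obtaining $\nabla u=0$ a.e.\ on $\{u=a_i\}$, and then, since $\nabla u\in W^{1,p}$, to each component of $\nabla u$, yielding $D^2u=0$ and hence $\Delta u=0$ a.e.\ on $\{u=a_i\}$. The main obstacle is precisely this reconciliation of the non-differentiability of $W$ on the contact set with a pointwise equation, resolved by combining the $W^{2,p}$ regularity produced by the bounded right-hand side with the vanishing of second derivatives of Sobolev functions on their level sets.
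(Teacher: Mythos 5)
Your proposal is correct and follows essentially the same route as the paper's proof: in case (1) the two-sided variation plus continuity of $W_u$, in case (3) localization of the perturbation to the open set $\{d(u,A)>0\}$, and in case (2) the one-sided bounds giving $\Delta u\in L^\infty_{loc}$ combined with the vanishing of (second-order) weak derivatives on the contact set. The only difference is that you spell out the iterated Stampacchia argument (first on $u$, then on $\nabla u$ using the $W^{2,p}_{loc}$ regularity produced by the bounded Laplacian) to justify $\Delta u=0$ a.e.\ on $\{u\in A\}$, where the paper simply cites the level-set vanishing property from \cite{book-eg}.
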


\begin{proof}
Take $D\in\BR^n$ to be an arbitrary bounded Lipschitz domain, and for $\phi\in C_0^\infty(D,\BR^m)$, we compute the first variation of the energy $J_D(u)$
\beq\label{first variation}
\begin{split}
0&\leq \int_{D} \left(\f12|\na (u+t\phi)|^2-\f12|\na u|^2+W(u+t\phi)-W(u)\right)\,dx\\
& = t\int_{D}\na u\cdot \na \phi\,dx+\f{t^2}{2}\int_{D}|\na\phi|^2\,dx+\int_{D}\left(W(u+t\phi)-W(u)\right)\,dx.
\end{split}
\eeq

When $1<\al<2$, $W$ can be written as \eqref{formula for W} with the condition \eqref{condition on g}. One can directly compute
\beq\label{formula W_u(u)}
W_u(u)=\left( \al(u-a_i)|u-a_i|^{\al-2}\prod\limits_{k\neq i}|u-a_k|^\al \right)g(u)+\left(\prod\limits_{i=1}^N |u-a_i|^\al \right)D_ug(u).
\eeq
Since we already proved $u\in C^{1,\g}(D)$, it is obvious that $W_u(u)\in C(D)$. Therefore when we divide \eqref{first variation} by $t$ and take $t\ri 0+$ or $0-$, it follows that $u$ solves \eqref{eleq for 1<al<2} in $D$.

For $\al=1$, dividing \eqref{first variation} by $t$ and letting $t\ri 0$, we can prove that
\beqo
\left|\int_{D}\na u\cdot \na \phi\,dx\right|\leq C\int_{D}|\phi|\,dx,
\eeqo
which implies $\D u\in L^\infty(D,\BR^m)$. Moreover, on any sub-domain $K\subset D\cap \{d(u,A)>0\}$, the equation \eqref{eleq for 1<al<2} holds. Combining the fact that $\D u=0$ a.e. on $\{x:u(x)\in A\}$ (which is due to the fact that weak derivatives for Sobolev functions vanish on level sets a.e. \cite{book-eg}), we have (since $u$ is continuous) that $u$ is a strong solution of \eqref{eleq for al=1} in $D$.

For $0<\al<1$, one can easily check that $u$ solves \eqref{eleq for 1<al<2} in the open set $\{x: d(u,A)>0\}$. However, since $W_u(u)$ blows up as $d(u,A)\ri 0$, the local integrability of $W_u(u)$ is not a priori known, so $u$ may not solve \eqref{eleq for al=1} on the whole space in distributional sense.
\end{proof}

\begin{rmk}
In the case $0<\al<1$, even though we cannot say $u$ is a distributional solution to \eqref{eleq for al=1}, we can deduce another equation for $u$ from the first domain variation. For any $\varphi\in C_0^1(D,\BR^n)$, it holds that
\begin{align*}
0&=\f{d}{dt}J_{D}(u(x+t\varphi(x)))\\
&=\int_D \left((\na u\na\va)\cdot \na u-(\di\va)(\f12|\na u|^2+W(u))\right)\,dx
\end{align*}
This formulation has also been utilized in \cite{lp,Weiss3}. Here we just present this form of equation for completeness and it will not be utilized in the rest of the paper.
\end{rmk}

With the Euler-Lagrange equation \eqref{eleq for 1<al<2} and the formula for $W_u(u)$ \eqref{formula W_u(u)}, we can easily improve the regularity of $u$ when $1<\al<2$.

\begin{proposition}\label{reg al>1}
When $1<\al<2$, the entire minimizer $u\in C^{2,\al-1}_{loc}(\BR^n,\BR^m)$.
\end{proposition}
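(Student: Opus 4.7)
The plan is to invoke Schauder regularity for Poisson's equation applied to the Euler-Lagrange equation \eqref{eleq for 1<al<2} from Lemma \ref{EL}, which reads $\D u = W_u(u)$ classically on all of $\BR^n$ in the regime $1<\al<2$. By Remark \ref{c1,alpha remark} we already have $u\in C^{1,\g}_{loc}(\BR^n,\BR^m)$ for every $\g\in(0,1)$; in particular $u$ is locally Lipschitz. It therefore suffices to show that the composition $x\mapsto W_u(u(x))$ lies in $C^{0,\al-1}_{loc}$, since then a standard interior Schauder estimate on each ball upgrades the regularity of $u$ to $C^{2,\al-1}_{loc}$.

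To analyze $W_u(u(x))$, I would start from the global factorization \eqref{formula for W} and the derivative formula \eqref{formula W_u(u)} (summed over $j=1,\dots,N$), and isolate the only genuinely singular ingredient, namely the vector-valued map
\beqo
\Phi:\BR^m\ri\BR^m,\quad \Phi(z):=z|z|^{\al-2}\quad\text{with}\quad \Phi(0):=0.
\eeqo
All other factors are either $C^2$ in $u$ (the function $g$) or of the form $|u-a_k|^{\al}$ with $\al>1$, which are $C^1$ in $u$ with derivative vanishing at $a_k$; composing with the locally Lipschitz map $u$ keeps them in $C^{0,\al-1}_{loc}$ as functions of $x$. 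Since $C^{0,\al-1}_{loc}$ is closed under products of bounded functions, everything reduces to showing $\Phi\in C^{0,\al-1}(\BR^m,\BR^m)$.

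The Hölder regularity of $\Phi$ follows from a two-case comparison. Given $z_1,z_2\in\BR^m$ with $|z_1|\leq |z_2|$, if $|z_1-z_2|\geq \tfrac12|z_2|$ then
\beqo
|\Phi(z_1)-\Phi(z_2)|\leq |z_1|^{\al-1}+|z_2|^{\al-1}\leq 2|z_2|^{\al-1}\leq C|z_1-z_2|^{\al-1}.
\eeqo
If instead $|z_1-z_2|<\tfrac12|z_2|$, the segment joining $z_1,z_2$ avoids the origin, $|D\Phi(z)|\leq C|z|^{\al-2}$ there, and the mean value theorem yields $|\Phi(z_1)-\Phi(z_2)|\leq C|z_2|^{\al-2}|z_1-z_2|$; since $2-\al>0$ and $|z_1-z_2|\leq |z_2|$, this is bounded by $C|z_1-z_2|^{\al-1}$ after estimating $|z_1-z_2|^{2-\al}\leq |z_2|^{2-\al}$. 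Composing with the Lipschitz map $u$ yields $\Phi(u(\cdot)-a_j)\in C^{0,\al-1}_{loc}$, and assembling all factors gives $W_u(u)\in C^{0,\al-1}_{loc}(\BR^n,\BR^m)$. Interior Schauder then closes the argument, and the only non-routine step is the Hölder estimate for $\Phi$ above; the rest is standard composition/product rules for Hölder spaces.
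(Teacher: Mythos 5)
Your proof is correct and takes essentially the same approach as the paper's: both invoke the Euler--Lagrange equation \eqref{eleq for 1<al<2} from Lemma \ref{EL}, combine the formula \eqref{formula W_u(u)} with the a priori $u\in C^{1,\g}_{loc}$ regularity of Remark \ref{c1,alpha remark} to conclude $W_u(u)\in C^{0,\al-1}_{loc}$, and then apply interior Schauder estimates. The only difference is that you supply the detail the paper leaves implicit, namely the two-case H\"older estimate showing $z\mapsto z|z|^{\al-2}$ lies in $C^{0,\al-1}$.
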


\begin{proof}
According to Lemma \ref{EL}, $u$ satisfies the equation \eqref{eleq for 1<al<2} when $1<\al<2$. Using the formula \eqref{formula W_u(u)} for $W_u(u)$ and the rough estimate $u\in C^{1,\gamma}_{loc}$, we see that $W_u(u)\in C^{\al-1}_{loc}(\BR^n,\BR^m)$. Then the $C^{2,\al-1}_{loc}$ regularity immediately follows from the classical Schauder estimate.
\end{proof}

\section{Estimate of $\cl^n(I_0)$ and existence of the free boundary}\label{section 3}
In this section we will prove the estimate \eqref{measure of I_0} for any nontrivial entire minimizer $u$ of the functional $J$. Furthermore, we also show that at every sufficiently large scale, the minimizer $u$ must contain at least two different phases.

Take $W$ satisfying the hypothesises H1 $\&$ H2 and assume $u$ is an entire minimizer for the functional $J$. Before stating our new results, we first recall two estimates from \cite{book-afs} and \cite{agz} without proof, which will play an important role in our arguments. Readers can refer to \cite{book-afs} and \cite{agz} for detailed proofs. These estimates for the scalar case $m=1$ are obtained in \cite{cc}.

\begin{proposition}\label{two estimates}
When $0<\al<2$, for any entire minimizer $u$ satisfying $\|u\|_{L^\infty(\BR^n)}<\infty$, the following two estimates hold true:
\begin{enumerate}
\itemsep0.5em
\item \underline{The basic estimate} (see \cite[Lemma 2.2]{agz})
For any $x_0\in\BR^n$, there exists an $r_0$ such that for $r>r_0$,
\beq\label{basic est}
J_{B_r(x_0)}(u)\leq Cr^{n-1},
\eeq
where the constant $C=C(M)$ is independent of $u$. We notice that this $r_0$ can be $0$ when $\al\in[1,2)$.
\item \underline{The density estimate} (see \cite[Theorem 5.2]{book-afs}) Take $a\in A$ to be a minimal point for $W(u)$. If for some $r_0,\,\lam,\,\mu_0>0$,
    \beqo
    \mathcal{L}^n(B_{r_0}(x)\cap \{|u-a|>\lam\})\geq \mu_0,
    \eeqo
    then there exists a constant $C(\mu_0,\lam)>0$ such that
    \beq\label{density est}
    \mathcal{L}^n(B_{r}(x)\cap \{|u-a|>\lam\})\geq C(\mu_0,\lam) r^n, \quad \forall r\geq r_0.
    \eeq
\end{enumerate}
\end{proposition}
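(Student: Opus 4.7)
This proposition collects two classical estimates proved in \cite{agz} and \cite{book-afs}, so I will sketch the strategy rather than reproduce the full technical arguments.

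For part (1), the \emph{basic energy estimate}, the natural route is comparison with an explicit competitor. Given $x_0 \in \BR^n$ and $r > 1$, I would construct a test function $v$ that agrees with $u$ on $\partial B_r(x_0)$, equals a fixed phase $a \in A$ throughout $B_{r-1}(x_0)$, and interpolates affinely along radial rays in the annulus $B_r(x_0) \setminus B_{r-1}(x_0)$. Since $\|u\|_{L^\infty} \leq M$, both $|\nabla v|$ and $W(v)$ are pointwise controlled by constants depending only on $W$ and $M$, so that
\beqo
J_{B_r(x_0)}(v) \leq C(M)\,\mathcal{L}^n\bigl(B_r(x_0)\setminus B_{r-1}(x_0)\bigr) \leq C(M)\,r^{n-1},
\eeqo
and the minimality of $u$ yields $J_{B_r(x_0)}(u) \leq C\,r^{n-1}$. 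The threshold $r_0(x_0)$ is only required to make the annulus non-empty; in the range $\al \in [1,2)$ the construction can be refined so that $r_0=0$.

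For part (2), the \emph{density estimate}, I would follow the classical De Giorgi--Caffarelli--Cordoba pattern. Set $V(\rho) := \mathcal{L}^n(B_\rho(x) \cap \{|u-a|>\lam\})$ and aim to establish the differential inequality
\beqo
\frac{d}{d\rho}\, V(\rho)^{(n-1)/n} \geq c(\lam) > 0
\eeqo
whenever $V(\rho)>0$. The ingredients are: the isoperimetric inequality, which turns $V(\rho)^{(n-1)/n}$ into a bound on the perimeter of $B_\rho \cap \{|u-a|>\lam\}$; a comparison argument that, on this ``bad'' set, replaces $u$ by the constant $a$ with a thin-annulus co-area gluing near $\partial B_\rho$, producing a competitor whose excess energy is controlled by $V'(\rho)$; and the basic estimate from part (1), which caps the global energy of $u$. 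Integrating from $\rho = r_0$, where $V(r_0) \geq \mu_0$ by hypothesis, up to $\rho = r$ then yields $V(r) \geq C(\mu_0,\lam)\,r^n$.

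The main technical obstacle in part (2) is that $\{|u-a|>\lam\}$ can intersect regions where $u$ is close to another phase $a_j$, so $W(u)$ provides no uniform lower bound on this set and the energy gain from the comparison must be extracted from the $|\nabla u|^2$ term via slicing / co-area rather than directly from $W$. Choosing $\lam < r_0 = \tfrac{1}{2}\min_{i\neq j}|a_i-a_j|$ is essential, since it localizes the slice $\{|u-a|=\lam\}$ to a neighborhood of the single well at $a$, where the comparison argument can be closed cleanly.
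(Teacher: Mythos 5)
The paper states this proposition without proof: it is quoted directly from \cite[Lemma~2.2]{agz} for part (1) and \cite[Theorem~5.2]{book-afs} for part (2), and the authors explicitly refer the reader to those sources (and to \cite{cc} for the scalar case). Your sketch of part (1) is precisely the argument used in \cite{agz}: the unit-width annular competitor interpolating radially between $u$ on $\pa B_r(x_0)$ and a fixed well $a$ on $B_{r-1}(x_0)$ has $|\na v|$ and $W(v)$ bounded purely in terms of $M$, so minimality gives $J_{B_r(x_0)}(u)\leq C(M)r^{n-1}$, with the independence of $u$ coming exactly from the $L^\infty$ bound. Your sketch of part (2) correctly identifies the Caffarelli--C\'ordoba framework (a differential inequality for $V(\rho)=\cl^n(B_\rho(x)\cap\{|u-a|>\lam\})$ driven by the isoperimetric inequality and minimality) and the genuine subtlety that $W$ has no uniform lower bound on the full set $\{|u-a|>\lam\}$, which can meet the other wells. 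Your description of the comparison step as ``replace $u$ by $a$ with a thin-annulus co-area gluing, excess energy controlled by $V'(\rho)$'' is stated loosely; in \cite{book-afs} the energy gain is extracted by coupling the coarea formula on the slab $\{\lam\leq|u-a|\leq\lam'\}$ (where $W\geq c(\lam)>0$ does hold) with the perimeter bound from isoperimetry, and this would need to be made precise to close the differential inequality. Since the proposition is outsourced by the paper, this level of sketch is appropriate, and the ingredients you list match those the cited proofs use.
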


Another important component of our arguments is the following non-degeneracy lemma.

\begin{lemma}\label{nondeg}
Assume $0<\al<2$. We take the point $a_1\in A$ and an entire minimizer $u$ for the functional J such that $\|u\|_{L^\infty(\BR^n)}<\infty$. There exists a suitably small number $\theta(W)$ and a constant $c=c(n,W)$, such that if $x_0\in\overline{\{0<|u-a_1|<\theta\}}$ and $B_r(x_0)\subset\overline{\{|u-a_1|<\theta\}}$, then
\beq\label{nondegeneracy}
\sup\limits_{B_r(x_0)}|u-a_1|\geq c(n,W)r^\f{2}{2-\al},
\eeq
where the constant $c(n,W)$ only depends on the dimension $n$ and the potential function $W$. Moreover $c(n,W)\sim O(\alpha)$ for $\alpha<<1$.
\end{lemma}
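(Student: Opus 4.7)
The plan is an energy-comparison contradiction argument built around the natural scaling $r^{2/(2-\alpha)}$ of the problem. WLOG normalize $x_0=0$ and set $M:=\sup_{B_r(0)}|u-a_1|$. I argue by contradiction, assuming $M<c\,r^{2/(2-\alpha)}$ for a small $c=c(n,W)$ to be determined. The first step is to build a competitor that cuts $u$ towards $a_1$ on the smaller ball $B_{r/2}$: pick a smooth radial cutoff $\phi$ with $\phi=0$ on $B_{r/2}$, $\phi=1$ outside $B_r$, $|\nabla\phi|\le 4/r$, and define $v=a_1+\phi(u-a_1)$. Then $v\equiv a_1$ on $B_{r/2}$ and $v\equiv u$ outside $B_r$, with $|v-a_1|\le M<\theta$ so that H2 controls $v$ in the same regime as $u$.

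Direct computation in the annulus gives the pointwise bounds $|\nabla v|^2\le 2|\nabla u|^2+CM^2/r^2$ and $W(v)\le CM^\alpha$; rearranging the minimality inequality $J_{B_r}(u)\le J_{B_r}(v)$ then yields the key estimate
\[
\int_{B_{r/2}}\!\Big[\tfrac12|\nabla u|^2+W(u)\Big]dx
\;\le\;\int_{B_r\setminus B_{r/2}}\tfrac12|\nabla u|^2\,dx+CM^2r^{n-2}+CM^\alpha r^n.
\]
I would next rescale by $\tilde u(y):=M^{-1}(u(ry)-a_1)$ on $B_1$, which turns $u$ into a minimizer of a rescaled functional whose potential-to-kinetic ratio is $\mu:=r^2M^{\alpha-2}$. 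Under the contradiction hypothesis $\mu>c^{\alpha-2}\to\infty$ as $c\to 0$ (since $\alpha<2$), forcing the rescaled inequality to squeeze $\tilde u$ to be uniformly small on $B_{1/2}$, contradicting $\|\tilde u\|_{L^\infty(B_1)}=1$ once the smallness is converted into pointwise control using the $C^{1,\beta}$ regularity (Proposition \ref{reg 0<al<=1}).

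To close the contradiction cleanly I would supplement the argument with a maximum-principle comparison against the radial barrier $\psi(x)=c'|x|^{2/(2-\alpha)}$, which is a subsolution of $\Delta w\le\alpha c_1w^{\alpha-1}$ for $c'$ small, matching the subsolution inequality $\Delta|u-a_1|\ge\alpha c_1|u-a_1|^{\alpha-1}$ that one derives in $\{u\ne a_1\}$ from H2 via $(u-a_1)\cdot W_u(u)\ge\alpha c_1|u-a_1|^\alpha-C|u-a_1|^{\alpha+1}$. The main obstacle will be executing this comparison across both regimes $\alpha<1$ and $\alpha>1$: the map $w\mapsto w^{\alpha-1}$ is decreasing in the first case and increasing in the second, reversing the direction of comparison and typically requiring an $\varepsilon$-regularization in the sublinear range. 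The announced asymptotic $c(n,W)\sim O(\alpha)$ as $\alpha\to 0$ is inherited from the coercivity factor $\alpha c_1$ appearing in the subsolution inequality, which degenerates linearly in $\alpha$.
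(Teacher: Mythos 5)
Your proposal takes a genuinely different route from the paper, and both of its threads have gaps. The energy-comparison thread does not close: after rescaling by $\tilde u = M^{-1}(u(r\cdot)-a_1)$ and dividing by $M^\alpha r^n$, the competitor inequality reduces, as $\mu:=r^2M^{\alpha-2}\to\infty$, to $\int_{B_{1/2}}g(\cdot)|\tilde u|^\alpha\,dy\leq C + \frac{1}{2\mu}\int_{B_1\setminus B_{1/2}}|\nabla\tilde u|^2\,dy$, where the annulus gradient term is not controlled and where, in any event, the left side is already bounded by $C$ from $|\tilde u|\leq 1$. It produces no pointwise smallness of $\tilde u$ on $B_{1/2}$ and no contradiction with $\|\tilde u\|_{L^\infty(B_1)}=1$. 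A cutoff competitor bounds the energy of $u$ from above in terms of $M$; non-degeneracy requires a \emph{lower} bound on $M$ and cannot be extracted from such an estimate. The barrier thread is the right idea (it is where the paper actually lives) but the comparison points the wrong way: $|u-a_1|$ is a subsolution and $\psi=c'|x-x_0|^{2/(2-\alpha)}$ a supersolution of $\Delta w = \alpha c_1 w^{\alpha-1}$, so when $\alpha>1$ (increasing nonlinearity, where comparison applies) this yields $|u-a_1|\leq\psi$ where ordered on the boundary, the opposite of non-degeneracy; when $\alpha<1$ the nonlinearity is decreasing, the comparison principle for $\Delta w = f(w)$ fails in general, and it is not clear that $\varepsilon$-regularisation rescues the argument.

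The missing idea is the auxiliary function $h:=|u-a_1|^{2-\alpha}-c|x-x_0|^2$, which the paper uses instead of comparing $|u-a_1|$ with $\psi$ directly. Because $W_u(u)=\alpha|u-a_1|^{\alpha-2}(u-a_1)g(u)+|u-a_1|^\alpha D_ug(u)$ near $a_1$, the exponent $2-\alpha$ exactly cancels the $|u-a_1|^{\alpha-1}$ nonlinearity and one gets
\[
\Delta h = -(2-\alpha)\alpha\,\frac{\big|\nabla|u-a_1|\big|^2}{|u-a_1|^\alpha} + (2-\alpha)\,\frac{|\nabla u|^2}{|u-a_1|^\alpha} + (2-\alpha)\alpha\, g(u) + (2-\alpha)(u-a_1)\cdot D_ug(u) - 2nc,
\]
where the constant term $(2-\alpha)\alpha g\geq(2-\alpha)\alpha C_g>0$ dominates the $D_ug$ contribution (choosing $\theta$ small) and $2nc$ (choosing $c<\alpha(2-\alpha)C_g/(8n)$). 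For $\alpha\leq 1$ the gradient terms are nonnegative since $|\nabla|u-a_1||\leq|\nabla u|$, so $\Delta h\geq 0$ on $\{|u-a_1|>0\}\cap B_r(x_0)$; for $\alpha\in(1,2)$ they are rearranged into a first-order term in $\nabla h$ plus a zeroth-order term of favourable sign, and the maximum principle still applies. Since $h(x_0)>0$ while $h<0$ on $\partial\{|u-a_1|>0\}\cap B_r(x_0)$, the maximum of $h$ over $\overline{B_r(x_0)\cap\{|u-a_1|>0\}}$ must sit on $\partial B_r(x_0)$, which is \eqref{nondegeneracy}. Your reading of the $O(\alpha)$ asymptotic of the constant as coming from the coercivity factor proportional to $\alpha$ is consistent with this computation.
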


\begin{proof}
Without loss of generality, suppose $a_1=0^m$. First we require that
\beqo
\theta<\f12\min\limits_{i\neq j}|a_i-a_j|=r_0.
\eeqo
By (H2), when $|u|<r_0$, $W(u)$ can be written as $W(u)=|u|^\al\cdot g(u)$ for some $g(u)\in C^2(B_{r_0}(0^m))$ satisfying \beqo
g(u)\geq C_g
\eeqo
for some constant $C_g>0$.

Assume $|u(x_0)|>0$ (if $|u(x_0)|=0$, then we simply take a sequence of points $\{x_i\}$ converging to $x_0$ and satisfy $|u(x_i)|>0$). Taking $h(x)=|u|^{2-\al}-c|x-x_0|^2$ for some constant $c$ which will be determined later, by direct calculation we have that if $|u(x)|>0$, then
\beq\label{laplaceh}
\D h=(2-\al)(-\al)\f{\left|\na|u|\right|^2}{|u|^\al}+(2-\al)\f{|\na u|^2}{|u|^\al}+(2-\al)u\cdot D_u(g)+\al(2-\al)g(u)-2nc
\eeq
If we take
\beq\label{condition on theta, c}
\theta<\min\{\f{\al C_g}{4\|D_ug(u)\|_{L^\infty(B_{r_0}(0^m))}}, \f12\min\limits_{i\neq j}|a_i-a_j| \},\quad c<\f{\al(2-\al)C_g}{8n},
\eeq
then \eqref{laplaceh} implies that
\beqo
\Delta h\geq -(2-\al)\al \f{\left|\na|u|\right|^2}{|u|^\al}+(2-\al)\f{\left|\na u\right|^2}{|u|^\al}
\eeqo

When $\al\leq 1$, it follows that $\Delta h\geq 0$ in $\{|u(x)|>0\}\cap B_r(x_0)$. Since $h(x_0)>0$ and $h(x)<0$ on $\pa\{|u(x)>0|\}\cap B_r(x_0)$, we must have
\beqo
\max\limits_{x\in\pa B_r(x_0)}h(x)>0,
\eeqo
which implies the lemma.

For $\al\in(1,2)$, combining \eqref{laplaceh} and \eqref{condition on theta, c} we deduce that in $\{|u|>0\}\cap B_{r}(x_0)$
\begin{align*}
&\D h+(2-\al)\al\f{\left|\na|u|\right|^2}{|u|^\al}-(2-\al)\f{\left|\na u\right|^2}{|u|^\al}\geq \f{\al(2-\al)C_g}{4}\\
\Rightarrow & \D h+\f{\al}{2-\al}\f{\na h\cdot \na(|u|^{2-\al}+c|x-x_0|^2)+4c^2|x-x_0|^2}{|u|^{2-\al}}\geq (2-\al)\f{\left|\na u\right|^2}{|u|^\al}+\f{\al(2-\al)C_g}{4}\\
\Rightarrow & \D h+\left( \f{\al\na(h+2c|x-x_0|^2)}{(2-\al)|u|^{2-\al}}  \right)\cdot\na h+\f{4c\al(-h+|u|^{2-\al})}{(2-\al)|u|^{2-\al}}\geq \f{\al(2-\al)C_g}{4}\\
\Rightarrow & \D h+\left( \f{\al\na(h+2c|x-x_0|^2)}{(2-\al)|u|^{2-\al}}  \right)\cdot\na h-\f{4c\al}{(2-\al)|u|^{2-\al}}\cdot h\geq 0.
\end{align*}
Here to derive the last inequality we further require that $c$ satisfies
\beq\label{condition on c}
c\leq \f{(2-\al)^2C_g}{16}.
\eeq
Then the maximum principle argument can be applied again to get
\beqo
\max\limits_{x\in\pa B_r(x_0)} |u(x)|> cr^\f{2}{2-\al}.
\eeqo
This completes the proof.
\end{proof}
\vspace{5mm}

Now we are ready to prove the first part \eqref{measure of I_0} of the Theorem \ref{main}. For the sake of convenience, we rewrite the statement in the following theorem.

\begin{theorem}[First part of Theorem \ref{main}]\label{first part main theorem}
Let $x_0\in\BR^n$, $u: \BR^n\ri \BR^m$ be a bounded nonconstant entire minimizer of the energy $J$. Then there are positive constants $R_0$ and $c$ such that
\beq\label{I_0 est}
\cl^n(B_R(x_0)\cap I_0)\leq cR^{n-1},\quad R>R_0,
\eeq
where $I_0$ is defined in \eqref{I_0}, which is the region where $W(u)>0$. The constant $c$ only depends on the dimension $n$, the potential function $W$ and $\|u\|_{L^\infty(\BR^n)}$.
\end{theorem}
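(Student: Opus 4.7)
The plan is to partition $B_R(x_0)$ into a grid of sub-cubes $\{Q_k\}$ of a fixed side length $s_0>0$ (depending on $W$, $\alpha$ and a local Lipschitz bound on $u$, but not on $R$) and classify each cube by its interaction with the contact set $A_u:=\{x\in\BR^n:u(x)\in A\}$. Let $\mathcal F_A=\{k:Q_k\subset A_u\}$, $\mathcal F_C=\{k:Q_k\subset I_0\}$, and $\mathcal F_B=\{k:Q_k\cap A_u\neq\varnothing\text{ and }Q_k\cap I_0\neq\varnothing\}$. Cubes in $\mathcal F_A$ do not contribute to $I_0\cap B_R$, so \eqref{I_0 est} reduces to bounding $\#(\mathcal F_B\cup\mathcal F_C)$ by $CR^{n-1}/s_0^n$. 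I will prove this by showing that each cube in $\mathcal F_B\cup\mathcal F_C$ forces a definite, $R$-independent amount $\varepsilon>0$ of energy in a fixed enlargement $\widetilde Q_k$ of $Q_k$; with the finite-overlap of $\{\widetilde Q_k\}$ and the basic estimate $J_{B_{R+s_0}}(u)\leq CR^{n-1}$ of Proposition \ref{two estimates}(1), this gives $\#(\mathcal F_B\cup\mathcal F_C)\cdot\varepsilon\leq CR^{n-1}$, and hence $\cl^n(I_0\cap B_R)\leq s_0^n\cdot\#(\mathcal F_B\cup\mathcal F_C)\leq C'R^{n-1}$.

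For $\mathcal F_C$, fix $\theta<r_0$ as in Lemma \ref{nondeg}; the open basins $\{u\in B_\theta(a_i)\}\subset\BR^n$ are then pairwise disjoint by H2. Choose $s_0>2(\theta/c(n,W))^{(2-\alpha)/2}$. If some $Q_k\in\mathcal F_C$ were contained in a single basin $\{0<|u-a_i|<\theta\}$, applying Lemma \ref{nondeg} to the inscribed ball $B_{s_0/2}\subset Q_k$ would give $\theta\geq c(n,W)(s_0/2)^{2/(2-\alpha)}$, contradicting the choice of $s_0$. Since $u$ is continuous and $Q_k$ is connected while the basins are disjoint, $Q_k$ cannot lie in the union $\bigcup_i\{|u-a_i|<\theta\}$ either, so there is $y\in Q_k$ with $\delta(y)\geq\theta$. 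A local Lipschitz bound on $u$, from Proposition \ref{reg 0<al<=1} and Remark \ref{c1,alpha remark}, then propagates this to $\min_i|u(x)-a_i|\geq\theta/2$ on a ball of fixed radius around $y$, yielding $\int_{\widetilde Q_k}W(u)\geq\varepsilon_C>0$.

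For $\mathcal F_B$, pick a free boundary point $x_\star\in Q_k$ with $u(x_\star)=a_i$; the same Lipschitz bound gives $B_{\theta/L}(x_\star)\subset\{|u-a_i|<\theta\}$, so Lemma \ref{nondeg} produces $y\in B_{\theta/L}(x_\star)$ with $|u(y)-a_i|\geq c(\theta/L)^{2/(2-\alpha)}$. Propagating this across a ball of radius comparable to $|u(y)-a_i|/L$ by Lipschitz continuity, combined with $W(u)\sim|u-a_i|^\alpha$ near $a_i$ from H2, gives $\int_{\widetilde Q_k}W(u)\geq\varepsilon_B>0$ in a neighborhood of $y$ contained in $\widetilde Q_k$. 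Summing the enlarged-cube energies against the basic estimate and using finite overlap of the $\widetilde Q_k$'s completes the count.

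The main obstacle I expect is the compatibility of the two calibrations: $s_0$ must be large enough relative to $(\theta/c(n,W))^{(2-\alpha)/2}$ to force every $\mathcal F_C$ cube out of a single basin, yet the balls produced by Lemma \ref{nondeg} in the $\mathcal F_B$ argument must fit inside a fixed enlargement of $Q_k$ so that summing their energies does not inflate constants with $R$. Once $s_0,\theta,L$ are chosen compatibly, the combinatorial counting is immediate. A secondary point is that the $\varepsilon$'s and hence the final constant $c$ deteriorate as $\alpha\downarrow 0$, consistent with the $O(\alpha)$ behavior of the non-degeneracy constant in Lemma \ref{nondeg}; a further mild point is ensuring the Lipschitz bound on $u$ is uniform in $\BR^n$, which follows by translation invariance of the problem from the local $C^{1,\beta}$ estimate of Proposition \ref{reg 0<al<=1}.
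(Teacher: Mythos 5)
Your proposal is correct and uses the same backbone as the paper's argument (grid of fixed-size cubes, the non-degeneracy Lemma~\ref{nondeg}, the basic energy estimate of Proposition~\ref{two estimates}(1), and a global Lipschitz bound on $u$ from the uniform $C^{1,\beta}$ regularity), but the classification of cubes is genuinely simpler and the energy extraction step is different. The paper splits the subcubes into five classes $T_1,\dots,T_5$ using measure-threshold quantities $\sigma_i^j=\cl^n(\{|u-a_j|<\theta/2\}\cap S_i)$ and adjacency relations, and for the ``two-phase'' class $T_2$ and the ``near-contact-boundary'' class $T_5$ it extracts a definite amount of Dirichlet energy via the co-area formula and the relative isoperimetric inequality, i.e.\ \eqref{co-area0}; the non-degeneracy lemma is used only to show that the ``interior'' cubes $T_4$ satisfy $u\equiv a_{j_0}$. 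You instead classify cubes purely topologically into $\mathcal F_A,\mathcal F_B,\mathcal F_C$ (contained in the contact set, intersecting both, contained in $I_0$) and show each $\mathcal F_B\cup\mathcal F_C$ cube carries a fixed amount of \emph{potential} energy $\int W(u)$ over a fixed enlargement, using only the non-degeneracy lemma plus Lipschitz propagation, so that the co-area/isoperimetric machinery and the adjacency bookkeeping (hence the class $T_1$ of boundary cubes) are not needed. What the paper's approach buys is that it also gives Lemma~\ref{value of L} and the sharper structural fact that ``interior'' cubes satisfy $u\equiv a_{j_0}$, which the paper reuses later (Theorem~\ref{two phase existence} and the upper bound \eqref{est free bdy}); your version yields the measure estimate \eqref{I_0 est} more directly but does not immediately produce those by-products.

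Two small technical points you should tighten, neither of which affects the soundness of the approach. First, when applying Lemma~\ref{nondeg} in the $\mathcal F_B$ case, the Lipschitz bound gives $B_{\theta/L}(x_\star)\subset\{|u-a_i|\leq\theta\}$, and the lemma requires $B_r(x_0)\subset\overline{\{|u-a_i|<\theta\}}$; take a slightly smaller radius (e.g.\ $r=\theta/(2L)$, or simply $r=\min\{\theta/(2L),\,(\theta/c(n,W))^{(2-\alpha)/2}\}$) so that the inclusion is strict and the produced value $cr^{2/(2-\alpha)}$ is automatically $\leq\theta$. Second, after obtaining $y$ with $|u(y)-a_i|\geq\kappa:=cr^{2/(2-\alpha)}$, you should note explicitly that the propagation ball $B_{\kappa/(2L)}(y)$ still lies in $B_{r_0}(a_i)$ in $u$-space (achieved by choosing $\theta$ small relative to $r_0$ at the outset), so that the representation $W(u)=|u-a_i|^\alpha g_i(u)$ of H2 applies there; with that, $W\geq c_i(\kappa/2)^\alpha$ on a ball of fixed radius and the lower bound $\varepsilon_B$ follows. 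The free-boundary point $x_\star$ exists because $A_u$ is closed, $I_0$ is open, and $Q_k$ is connected, and one should record that $x_\star\in\overline{\{0<|u-a_i|<\theta\}}$ (which holds because $x_\star\in\partial\{u=a_i\}$ and the phases are isolated in $\BR^m$) to verify the hypotheses of Lemma~\ref{nondeg}.
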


\begin{proof}
Without loss of generality, suppose $x_0=0^n$ and write $B_R=B_R(0^n)$. According to the basic estimate \eqref{basic est} in Proposition \ref{two estimates}, we know that there exist positive constants $C_0, \,r_0$ such that for any $R>r_0$
\begin{equation}
\label{energy density 0}\int_{B_R} \f12|\na u|^2+W(u)\,dx\leq C_0R^{n-1}.
\end{equation}

For the sake of convenience, we use the cubes which are centered at $0^n$ to replace $B_R$. Define
\beqo
\tilde{S}_R:=\{x\in \BR^n: x_i\in(-R,R),\text{ for }i=1,2,...,n\}.
\eeqo
Let $L$ be a constant whose value will be specified later. For any positive integer $k$,  we can divide the cube $\tilde{S}_{kL}$ into $(2k)^n$ identical cubes with the side length $L$. We number all these sub-cubes by $S_1,S_2,...,S_{K}$ where $K:=(2k)^n$. And we take $\theta$ to be the constant $\theta(W)$ in the Lemma \ref{nondeg}. Then we define
\beq\label{def sigma i,j}
\sigma_i^j:=\cl^n(\{|u-a_j|<\f{\th}{2}\}\cap S_i), \quad \text{for }i=1,...K,\; j=1,..., N.
\eeq

Take $\e:=\e(\theta)$ to be a small constant to be specified later, depending only on $\theta$ and $\|u\|_{L^\infty}$. Also we introduce the notion of adjacent sub-cubes: $S_{i_1}$ and $S_{i_2}$ are called adjacent if and only if
\beqo
\overline{S_{i_1}}\cap \overline{S_{i_2}}\neq \varnothing,\quad  1\leq i_1,\,i_2\leq K
\eeqo
We divide $\{S_i\}_1^K$ into the following five non-overlapping classes
\begin{itemize}
\itemsep0.5em
\item[1] Boundary sub-cubes of $\tilde{S}_{kL}$:
\beqo
T_1:=\{S_i:\text{ the number of adjacent cubes of }S_i \text{ is less than }3^n-1\}.
\eeqo
\item[2] Sub-cubes that contain two phases:
\beqo
T_2:=\{S_i: \exists j_0 \text{ s.t. } \s_i^{j_0}=\max\limits_{1\leq j\leq N}\s_i^j\leq(1-2\e)L^n, \;\max\limits_{j\neq j_0} \s_i^j\geq\f{\e}{N-1} L^n\}\backslash T_1.
\eeqo
\item[3] Sub-cubes that contain regions where $u$ stays away from any $a_j$.
\beqo
T_3:=\{S_i: \exists j_0  \text{ s.t. } \s_i^{j_0}=\max\limits_{1\leq j\leq N}\s_i^j\leq(1-2\e)L^n, \;\max\limits_{j\neq j_0} \s_i^j<\f{\e}{N-1} L^n\}\backslash T_1.
\eeqo
\item[4] ``Interior" sub-cubes of the contact set $\{x:u(x)\in A\}$:
    \beqo
    T_4:=\{S_i: \exists j_0 \text{ s.t. } \s_i^{j_0}>(1-2\e)L^n \text{ and }\s_p^{j_0}> (1-2\e)L^n,
     \forall S_p\text{ adjacent to }S_{i}\}\backslash T_1.
     \eeqo
\item[5] Sub-cubes close to the boundary of the contact set:
\beqo
T_5:=\{S_i: \exists j_0 \text{ s.t. } \s_i^{j_0}> (1-2\e)L^n \text{ and }\s_p^{j_0}\leq  (1-2\e)L^n,\text{ for some } S_p\text{ adjacent to }S_{i}\}\backslash T_1.
\eeqo
\end{itemize}

Now we estimate the number of cubes in each class. First note that $S_i\in T_1$ means $S_i$ is one of the boundary cubes of $\tilde{S}_{kL}$, therefore
\beq\label{number of T_1}
|T_1|\leq c_0(n)k^{n-1} \quad \text{ for some dimensional constant }c_0(n).
\eeq
For a cube $S_i\in T_2$, assume $\s_i^{j_0}=\max\limits_{1\leq j\leq N}\s_i^j\leq (1-2\e)L^n$ and $\s_i^{j_1}\geq \f{\e}{N-1}L^n$ for some $j_1\neq j_0$. By the definition of $\s_i^j$ and $\theta<\f12|a_{j_0}-a_{j_1}|$, we can infer that for any $r\in[\f{\th}{2},\,|a_{j_0}-a_{j_1}|-\f{\th}{2}]$, it holds that
\beqo
\cl^n(\{|u-a_{j_0}|<r\}\cap S_i)\geq  \f{\e}{N-1} L^n,\quad \cl^n(S_i\backslash \{|u-a_{j_0}|<r\})\geq \f{\e}{N-1} L^n.
\eeqo
Applying the co-area formula and the relative isoperimetric inequality (see for example \cite{Thomas}), we have
\begin{equation}\label{co-area0}
\begin{split}
&\int_{S_i}|\na u|^2\,dx\\
\geq & \f{1}{L^n} \left(\int_{S_i} |\na(u-a_{j_0})|\,dx\right)^2\\
\geq &\f{1}{L^n} \left( \int_{\th/2}^{|a_{j_0}-a_{j_1}|-\th/2} \ch^{n-1}(\{|u-a_{j_0}|=r\}\cap S_i)\,dr \right)^2\\
\geq & \f{1}{L^n} \left( \int_{\th/2}^{|a_{j_0}-a_{j_1}|-\th/2} C \left(\min\{\cl^n(\{|u-a_{j_0}|<r\}\cap S_i),\cl^n(S_i\backslash \{|u-a_{j_0}|<r\}) \}\right)^{\f{n-1}{n}} \,dr \right)^2\\
\geq &c_1(L,\th,\e)>0
\end{split}
\end{equation}
From the basic estimate \eqref{basic est} we get
\beq\label{number of T_2}
|T_2|\leq \f{C(kL)^{n-1}}{c_1}= c_2(L,\th,\e)k^{n-1},\quad k\geq k_0,
\eeq
where $k_0$ is a constant.

For a cube $S_i\in T_3$,
\beqo
\cl^n\left(\left\{|u-a_j|>\f{\theta}{2},\;\forall 1\leq j\leq N\right\}\cap S_i\right)>\e L^n
\eeqo
By the Hypothesis H1 and H2 on $W$ and the assumption $\|u\|_{L^\infty}<\infty$, there is a constant $c_3$ which depends on $\|u\|_{L^\infty},\theta$ such that
\beqo
W(u)\geq c_3,\quad \text{when }|u-a_j|>\f{\theta}{2},\; \forall 1\leq j\leq N
\eeqo
Thus by \eqref{energy density 0} the number of sub-cubes $T_3$ is bounded by
\beq\label{number of T_3}
|T_3|\leq c_4(L,\th,\e,\|u\|_{L^\infty})k^{n-1},\quad k\geq k_0.
\eeq

From now on we focus on the analysis of cubes in $T_4$ and $T_5$. Take $S_i$ in $T_4$ or $T_5$, then there is a $j_0$ such that $\s_i^{j_0}>(1-2\e)L^n$. In this case, we claim that when $\e$ is suitably chosen, we can assure that
\beqo
\max\limits_{x\in S_i}|u(x)-a_{j_0}|<\theta
\eeqo
If there exists $x_0\in S_i$ such that $|u(x_0)-a_{j_0}|\geq \theta$, then we have that there exists a constant $c_5(\theta,\|\na u\|_{L^\infty})$ such that
\beqo
\cl^n(\{|u-a_{j_0}|>\theta/2\}\cap  S_i)\geq c_5.
\eeqo
We note that the uniform boundedness of $|\na u|$ follows from the $C^{1,\beta}$ regularity (Proposition \ref{reg 0<al<=1} and Proposition \ref{reg al>1}) and the assumption that $|u|$ is uniformly bounded. And $c_5$ doesn't depend on $j_0$. Then the claim follows if we simply take
\beq\label{value of epsilon}
\e<\f{c_5}{2L^n}.
\eeq

\begin{lemma}\label{value of L}
When $L$ is suitably chosen depending on $\theta$, in any cube $S_i\in T_4\cup T_5$, it holds
\beq\label{measureofa_j}
\cl^n(\{u(x)=a_{j_0}\}\cap S_i)\geq \om_n\left(\f{L}{4}\right)^n
\eeq
where $\om_n$ is the volume of the n-dimensional unit ball.% The same inequality also works for $\{u(x)=a_2\}$ for $S_i$ in Class III satisfying $\s_i\geq (1-2\e)L^n$.
\end{lemma}

\begin{proof}
We proceed by contradiction and denote the central point of $S_i$ by $z_i$. So,
\beqo
|\{u(x)=a_{j_0}\}\cap S_i|< \om_n\left(\f{L}{4}\right)^n
\eeqo
Then there must be a point $x_1\in B_R(z_i,\f{L}{4})$ such that $x_1\in \overline{\{0<|u-a_{j_0}|<\theta\}}$. Moreover, we have
\beqo
B_{\f{L}{4}}(x_1)\subset S_i\subset \overline{\{|u-a_{j_0}|<\theta\}}
\eeqo
Therefore we are in the position to apply Lemma \ref{nondeg} to deduce that
\beqo
\sup\limits_{B_{\f{L}{4}}(x_1)}|u-a_{j_0}|\geq c(n,W)\left(\f{L}{4}\right)^{\f{2}{2-\al}}
\eeqo
which contradicts with $\max\limits_{x\in S_i}|u-a_{j_0}|<\theta$ if we choose the constant $L$ at the beginning satisfying $c(n,W)\left(\f{L}{4}\right)^{\f{2}{2-\al}}>2\theta$. This completes the proof of Lemma \ref{value of L}.
\end{proof}

If the cube $S_i\in T_4$, then by definition we have
\beqo
|u(x)-a_{j_0}|<\theta, \quad \forall x\in S_i\cup(\bigcup_{S_p\text{ adjacent to }S_i} S_p)
\eeqo
By the same argument as in the proof of the lemma above, we obtain that
\beqo
u(x)\equiv a_{j_0},\quad x\in S_i.
\eeqo

If $S_i\in T_5$, then there must be at least one adjacent cube of $S_i$, denoted by $S_{p}$, such that
\beq\label{est in Sp}
|\{|u-a_{j_0}|>\f{\theta}{2}\}\cap S_{p}|>\e L^n.
\eeq
We set
\beqo
Q_{S_i}:=S_i\cup(\bigcup_{S_p\text{ adjacent to }S_i} S_p)
\eeqo
Then by \eqref{measureofa_j}, \eqref{est in Sp} and the co-area formula, we can compute similarly as in \eqref{co-area0} to get
\beqo
\int_{Q_{S_i}}|\na u|^2\,dx\geq c_6(L, \th, \e).
\eeqo
Since each point can belong to at most $3^n$ different $Q_{S_i}$, utilizing \eqref{basic est} we conclude
\beqo
C(n)(kL)^{n-1}\geq \sum\limits_{S_i\in T_5}\int_{Q_{S_i}}|\na u|^2\,dx\geq c_6|T_5|,
\eeqo
which implies
\beq\label{number of T_5}
|T_5|\leq c_7(n, L,\th,\e)k^{n-1}.
\eeq

Finally, combining \eqref{number of T_1}, \eqref{number of T_2}, \eqref{number of T_3} and \eqref{number of T_5} we get
\beq
\cl^n(\tilde{S}_{kL}\cap I_0)\leq (|T_1|+|T_2|+|T_3|+|T_5|)L^n\leq c_8(n, L,\th,\e)(kL)^{n-1}.
\eeq
Since $B_{kL}\subset \tilde{S}_{kL}$, we can get \eqref{I_0 est} after taking $k$ to be the smallest integer larger than $\f{R}{L}$ for sufficiently large $R$. Also if we carefully check the definitions of all the constants in the proof we conclude that $c_8$ only depends on the dimension $n$, the potential $W$ and the uniform bound of $|u|$, but not on the specific solution $u$. This completes the proof of Theorem \ref{first part main theorem}.

\end{proof}

Theorem \ref{first part main theorem} implies that a bounded entire minimizer $u(x)$ should satisfy $W(u)=0$ in ``most of the space". Next we further show that at sufficiently large scales, $u$ must possess at least two different phases, each of which contains some definite measure of order $R^n$.

\begin{lemma}\label{lemma ci(th)}
Let $x_0\in\BR^n$, $u:\BR^n\ri\BR^m$ be a bounded entire minimizer of $J$. Assume that $u\not\equiv a_i$ for any $i\in \{1,2,...,N\}$. We take an arbitrary constant $\theta<r_0:=\f12 \min\limits_{1\leq i\neq j\leq N}|a_i-a_j|$, then there exist positive constants $R_0, c(u,\theta)$ such that for any $R\geq R_0$, there are $a_i,a_j\in A$, which depend on $R$, satisfying
\beq\label{ci(th)}
\cl^n(B_R(x_0)\cap \{|u-a_k|<\theta\})\geq cR^n,\quad k=i,j.
\eeq
\end{lemma}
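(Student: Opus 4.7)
The strategy is to combine Theorem \ref{first part main theorem} (which forces the minimizer to sit, up to a set of measure $O(R^{n-1})$, inside the contact set $\{u\in A\}$) with the density estimate \eqref{density est} applied at reference points where $u$ deviates from each phase.

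First I would produce a single dominant phase by pigeonhole. From Theorem \ref{first part main theorem},
\beqo
\cl^n(B_R(x_0)\cap \{u\in A\})\geq \om_n R^n-cR^{n-1}.
\eeqo
Since $\theta<r_0$, the sets $\{|u-a_i|<\theta\}$ are pairwise disjoint, and $\{u=a_i\}\subset \{|u-a_i|<\theta\}$, so
\beqo
\sum_{i=1}^N\cl^n(B_R(x_0)\cap\{|u-a_i|<\theta\})\geq \om_nR^n-cR^{n-1}.
\eeqo
Pigeonhole yields an index $i_R\in\{1,\dots,N\}$ (possibly varying with $R$) with $\cl^n(B_R(x_0)\cap\{|u-a_{i_R}|<\theta\})\geq c_1 R^n$ for $R$ large.

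For the second phase I would exploit the assumption $u\not\equiv a_i$. For each $i\in\{1,\dots,N\}$ fix once and for all a point $y_i\in\BR^n$ with $u(y_i)\neq a_i$, and set $\e_i:=|u(y_i)-a_i|/2>0$. By continuity of $u$ (from the regularity in Section \ref{section 2}) there is $\r_i>0$ such that $|u-a_i|>\e_i$ throughout $B_{\r_i}(y_i)$. The density estimate \eqref{density est} applied with $a=a_i$, $\lam=\e_i$, and $\mu_0=\om_n\r_i^n$ gives
\beqo
\cl^n(B_r(y_i)\cap\{|u-a_i|>\e_i\})\geq \tl c_i\, r^n,\quad r\geq \r_i,
\eeqo
for some $\tl c_i>0$ depending only on $u$. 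Set $c_0:=\min_i \tl c_i>0$.

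Taking $R$ large enough that $B_{R/2}(y_i)\subset B_R(x_0)$ and $R/2\geq \r_i$ for every $i$, I would apply the above with the specific $i=i_R$. Inside the contact set, $|u-a_{i_R}|>\e_{i_R}$ forces $u=a_k$ for some $k\neq i_R$ (since the phases are separated by at least $2r_0>2\e_{i_R}$), so combining with Theorem \ref{first part main theorem},
\beqo
\cl^n\Bigl(B_R(x_0)\cap\bigcup_{k\neq i_R}\{u=a_k\}\Bigr)\geq c_0(R/2)^n-cR^{n-1}\geq c_2R^n
\eeqo
for $R$ sufficiently large. A final pigeonhole over the $N-1$ indices $k\neq i_R$ produces $j_R\neq i_R$ with $\cl^n(B_R(x_0)\cap\{u=a_{j_R}\})\geq c_2R^n/(N-1)$, and the inclusion $\{u=a_{j_R}\}\subset\{|u-a_{j_R}|<\theta\}$ delivers the desired second phase. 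The main delicate point is that the dominant index $i_R$ is allowed to depend on $R$; this forces the density estimate to be applied \emph{uniformly} across all $N$ candidate phases, which is why the reference points $y_i$ are fixed a priori and the constant $c_0$ is taken as a minimum over the finite set of phases.
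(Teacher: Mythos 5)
Your proof is correct. It reaches the same conclusion as the paper, and it shares the same essential ingredients (the density estimate and pigeonhole), but it is organized around a different tool. The paper's own proof works directly from the basic energy estimate $J_{B_R(x_0)}(u)\leq CR^{n-1}$: for each fixed $k$ it selects $\lam_k$ so that, by the density estimate, $u$ stays at distance $>\lam_k$ from $a_k$ on a set of density bounded below, and then uses the fact that $W(u)\geq C(\lam_k,\theta)>0$ on $\{|u-a_k|>\lam_k\}\cap\bigcap_{j\neq k}\{|u-a_j|\geq\theta\}$ together with the basic estimate to conclude that this set has measure $O(R^{n-1})$; from this one gets $\cl^n\bigl(B_R\cap\bigcup_{j\neq k}\{|u-a_j|<\theta\}\bigr)\geq c_kR^n$ for every $k$, and two successive pigeonholes produce the two phases. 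You instead invoke Theorem \ref{first part main theorem}, which is a genuinely stronger statement (it bounds $\cl^n(B_R\cap I_0)$ rather than merely the measure of $\{W(u)\geq C\}$), and use it twice: once to extract the dominant phase $i_R$, and once more after applying the density estimate at the a priori fixed reference points $y_i$ to obtain a second phase. Both arguments are valid, but the paper's is slightly more self-contained — it does not rely on Theorem \ref{first part main theorem}, only on Proposition \ref{two estimates} — whereas yours trades self-containment for a cleaner two-step pigeonhole. Since Theorem \ref{first part main theorem} is proved before this lemma and its proof does not depend on it, there is no circularity. One cosmetic remark: the inequality $2r_0>2\e_{i_R}$ you mention is not actually needed (nor is it guaranteed, since $\e_{i_R}=|u(y_{i_R})-a_{i_R}|/2$ can exceed $r_0$); the implication you want is simply that $u\in A$ together with $|u-a_{i_R}|>\e_{i_R}>0$ forces $u=a_k$ for some $k\neq i_R$, which holds trivially.
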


\begin{proof}
Since $u$ is nonconstant, by $C^{1,\beta}$ regularity of $u$ there is some $R_1>0, \,0<\lam<r_0,\,\mu_0>0$ such that
\beqo
\cl^n(B_{R_1}(x_0)\cap \{|u-a_1|>\lam\})\geq \mu_0.
\eeqo
Then by the density estimate \eqref{density est} in Proposition \ref{two estimates}, there exists $\mu_1$ such that
\beq\label{u-a1>lam}
\cl^n(B_{R}(x_0)\cap \{|u-a_1|>\lam\})\geq \mu_1R^n,\quad \forall R\geq R_1
\eeq
Take $\theta<r_0$ to be an arbitrary constant. By our hypothesis on $W$, there is a positive constant $C=C(\lam,\theta, \|u\|_{L^\infty})$ such that
\beqo
W(u)>C, \;\text{ when } |u-a_1|>\lam,\,|u-a_j|\geq \theta \text{ for any }j\neq 1.
\eeqo
Applying the basic estimate \eqref{basic est} in Proposition \ref{two estimates}, for enough large $R$,
\beq\label{u-a_2>theta}
\cl^n(B_R(x_0)\cap \{|u-a_1|>\lam,\,|u-a_j|\geq \theta \text{ for any }j\neq 1\})\leq C_2R^{n-1},
\eeq
for some constant $C_2$. Combining \eqref{u-a1>lam} and \eqref{u-a_2>theta}, we obtain that
\beqo
\begin{split}
&\cl^n(B_{R}(x_0)\cap (\bigcup\limits_{j\neq 1}\{|u-a_j|<\theta\}))\\
\geq & \cl^n(B_R(x_0)\cap \{|u-a_1|>\lam, |u-a_j|<\theta \text{ for some }j\neq 1\})\geq c_1(u,\theta)R^n, \quad \forall R>\tilde{R}_1
\end{split}
\eeqo
for some constants $\tilde{R}_1$ and $c_1$. The same argument also works for the set $B_R(x_0)\cap(\bigcup\limits_{j\neq k}\{|u-a_j|<\theta\})$ for any $k\in \{1,2,...,N\}$, i.e. there exists $\tilde{R}_k, \;c_k>0$ such that
\beqo
\cl^n(B_{R}(x_0)\cap (\bigcup\limits_{j\neq k}\{|u-a_j|<\theta\}))\geq c_k(u,\theta) R^n, \forall R\geq \tilde{R}_k
\eeqo
Finally, we take $R_0=\min\limits_{k} \tilde{R}_k$ and $c=\f{1}{N-1}\min\limits_k c_k$ and the conclusion of the lemma easily follows.
\end{proof}

In the following theorem, we show that in any ball $B_R(x_0)$ with radius $R$ large enough, the sets $\{u=a_i\}$ and $\{u=a_j\}$ ($a_i,\,a_j$ from Lemma \ref{lemma ci(th)}) must contain a set of measure of the order $R^n$.

\begin{theorem}\label{two phase existence}
Let $x_0\in \BR^n$, $u: \BR^n\ri\BR^m$ be a bounded entire minimizer of the energy $J$, and $u\not\equiv a_j$ for any $j\in\{1,2,...,N\}$. Then there are positive constants $R_0$ and $c$ (both depend on $u$) such that for any $R\geq R_0$, there are $a_i,a_j$ depending on $R$ such that
\beq
\min\{\mathcal{L}^n(B_R(x_0)\cap \{u=a_i\}),\mathcal{L}^n(B_R(x_0)\cap \{u=a_j\})\}\geq  cR^n,\quad \forall R>R_0
\eeq
\end{theorem}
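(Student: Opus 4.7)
\medskip

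\noindent\textbf{Proof plan for Theorem \ref{two phase existence}.} The strategy is to combine the soft information from Lemma \ref{lemma ci(th)} (which only locates mass in the thickened neighborhoods $\{|u-a_k|<\theta\}$) with the sharp measure bound on the diffuse interface from Theorem \ref{first part main theorem}, and thereby transfer the $R^n$ lower bound from the neighborhood of a phase to the contact set of that phase itself.

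\smallskip

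Concretely, I would proceed as follows. First, fix any $\theta<r_0=\frac12\min_{i\neq j}|a_i-a_j|$ and apply Lemma \ref{lemma ci(th)} to obtain radii $R_1$ and a constant $c>0$ so that for every $R\geq R_1$ there are indices $i=i(R),\,j=j(R)$ with
\[
\cl^n\!\bigl(B_R(x_0)\cap\{|u-a_k|<\theta\}\bigr)\geq cR^n,\qquad k=i,j.
\]
Next, I would split the neighborhood of $a_k$ into the contact set and its complement inside the thickened region:
\[
\{|u-a_k|<\theta\}=\{u=a_k\}\;\cup\;\{0<|u-a_k|<\theta\}.
\]
The crucial observation is that the second piece is contained in the diffuse interface $I_0$. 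Indeed, if $0<|u(x)-a_k|<\theta<r_0$, then $u(x)$ is strictly closer to $a_k$ than to any other $a_\ell$, so $\delta(x)=|u(x)-a_k|>0$, placing $x$ in $I_0$.

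\smallskip

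Now I invoke Theorem \ref{first part main theorem}, which gives a constant $C>0$ and a radius $R_0'$ such that
\[
\cl^n\!\bigl(B_R(x_0)\cap I_0\bigr)\leq CR^{n-1}\qquad\text{for }R\geq R_0'.
\]
Combining the two estimates for $R\geq\max(R_1,R_0')$ yields
\[
\cl^n\!\bigl(B_R(x_0)\cap\{u=a_k\}\bigr)\geq cR^n-CR^{n-1},\qquad k=i,j,
\]
and choosing $R_0\geq 2C/c$ gives the desired lower bound $\frac{c}{2}R^n$ for both $k=i$ and $k=j$.

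\smallskip

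There is no real obstacle here: the content is entirely packaged in the two inputs. The only small point requiring care is that both the pair $(a_i,a_j)$ and the constants may depend on $R$, but this is exactly what is allowed in the statement, and the constant $c/2$ in the final bound is independent of $R$ because $c$ and $C$ are. The uniform $C^{1,\beta}$ bound needed to say $\{0<|u-a_k|<\theta\}\subset I_0$ has already been established in Section \ref{section 2}, and the identification of $\{u=a_k\}$ as a genuine contact set (thanks to the free boundary feature of subquadratic potentials) is built into Definition \ref{diffuse interface}.
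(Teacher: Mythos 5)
Your proof is correct, and it takes a genuinely different and noticeably cleaner route than the paper's. Where the paper reruns the cube-decomposition machinery from the proof of Theorem \ref{first part main theorem} (splitting $\tilde{S}_{kL}$ into sub-cubes, defining the classes $U_1$, $U_2$, and counting how many sub-cubes have $\sigma_i^{j_0}>(1-2\e)L^n$ before invoking Lemma \ref{value of L}), you instead treat Theorem \ref{first part main theorem} as a black box. The decisive observation is the disjoint decomposition
\[
\{|u-a_k|<\theta\}=\{u=a_k\}\;\sqcup\;\{0<|u-a_k|<\theta\},
\]
together with the set-theoretic inclusion $\{0<|u-a_k|<\theta\}\subset I_0$ (valid since $\theta<r_0=\frac12\min_{i\neq j}|a_i-a_j|$ forces $\delta(x)=|u(x)-a_k|>0$ there). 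Subtracting the $O(R^{n-1})$ measure of $I_0$ from the $\Omega(R^n)$ measure of the $\theta$-neighborhood coming from Lemma \ref{lemma ci(th)} gives the bound at once. What your approach buys is brevity and a cleaner separation of concerns: the nondegeneracy (through Lemma \ref{value of L}) and sub-cube counting are entirely encapsulated in the already-proved Theorem \ref{first part main theorem}. What the paper's approach buys is a bit of extra structural information (an explicit count $K_1\gtrsim k^n$ of ``pure'' sub-cubes each contributing $\omega_n(L/4)^n$ of contact set), but none of it is needed for the statement. One cosmetic remark: the appeal to the uniform $C^{1,\beta}$ bound in your final paragraph is superfluous --- the inclusion $\{0<|u-a_k|<\theta\}\subset I_0$ is purely set-theoretic and requires no regularity.
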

\begin{proof}
Without loss of generality, suppose $x_0=0^n$ and write $B_R=B_R(0^n)$. According to the Proposition \ref{two estimates} and Lemma \ref{lemma ci(th)}, we know that for any sufficiently large $R$, there are $a_i,a_j\in A$ such that \eqref{ci(th)} holds.

The proof relies on the same technique as the proof of Theorem \ref{first part main theorem}. So we will only present the main ingredients and omit some technical details. Take $L$ as the same constant in Theorem \ref{first part main theorem} and $k\in\mathbb{N}$. We consider the domain
\beqo
\tilde{S}_{kL}:=\{x\in \BR^n: x_i\in (-kL,kL)\},
\eeqo
and then divide $\tilde{S}_{kL}$ into $K=(2k)^n$ identical sub-cubes $S_1,...,S_K$, each of which has side of length $L$. We also recall the definition of $\s_i^j$ in \eqref{def sigma i,j}. By Lemma \ref{lemma ci(th)}, there are two phases $a_i, a_j$ (for simplicity we assume they are $a_1,a_2$) such that
\beq\label{a_1,a_2 in S_kL}
\cl^n(\tilde{S}_{kL}\cap \{|u-a_j|<\f{\th}{2}\})\geq c(kL)^n,\quad j=1,2.
\eeq

Take $\e:=\e(u,\theta)$ be a small constant such that
\begin{itemize}
\itemsep0.5em
\item[a.] \eqref{value of epsilon} holds. As a result, if $\s_i^j>(1-2\e)L^n$, then $|u(x)-a_j|<\theta$ for any $x\in S_i$.
\item[b.] $\e\leq \f{c}{2^{n+3}}$ where $c$ is the constant in \eqref{a_1,a_2 in S_kL}.
\end{itemize}

Then we divide $\{S_i\}_1^K$ into the following two classes
\begin{itemize}
\itemsep0.5em
\item[1] $U_1:=\{S_i: \exists j_0 \text{ s.t. } \s_i^{j_0}=\max\limits_{1\leq j\leq N}S_i^j\leq(1-2\e)L^n\}.$
\item[2] $U_2:=\{S_i: \exists j_0 \text{ s.t. } \s_i^{j_0}=\max\limits_{1\leq j\leq N}S_i^j>(1-2\e)L^n.\}$
\end{itemize}

From the proof of Theorem \ref{first part main theorem}, we have
\beqo
|U_1|\leq c_0(L,\theta,\e)k^{n-1}.
\eeqo

Let $K_1$ denote the number of sub-cubes $S_i$ satisfying $\s_i^1>(1-2\e)L^n$. We obtain from \eqref{a_1,a_2 in S_kL}
\beq
\begin{split}
c(kL)^n&\leq \sum\limits_{1\leq i\leq (2k)^n}\s_i^1\\
&\leq |U_1|L^n+K_1 L^n+ \left((2k)^n-|U_1|-K_1 \right)(2\e L^n)\\
&\leq c_0k^{n-1}L^n +K_1L^n+\f{c}{4} (kL)^n\quad (\text{Property b of }\e),
\end{split}
\eeq
which immediately implies that $K_1\geq \f{c}{2}k^n$ whenever $k$ is large enough. Together with Lemma \ref{value of L} we have
\beq\label{estimate two phase}
\cl^n (\tilde{S}_{kL}\cap \{u=a_1\})\geq \f{c}{2}k^n\om_n (\f{L}{4})^n\geq c_1(kL)^n,
\eeq
for some constant $c_1=c_1(W,u)$. For $\{u=a_2\}$ the estimate \eqref{estimate two phase} still holds. One can easily check that \eqref{estimate two phase} implies the statement of Theorem \ref{two phase existence}.

\end{proof}

%\begin{rmk}
%We note that in the proof, the crucial parameters $\e, \,L$ and the constants $c_1-c_8,\, C_1,\, C_2$ all depend on $\theta$ from Lemma \ref{nondeg} and the constant $c_1(u,\th), \,c_2(u,\theta)$ from Lemma \ref{lemma ci(th)}, and therefore they only depend on the potential function $W$ and the uniform bound of $|u|$.
%\end{rmk}

\section{Weiss' Monotonicity formula and a growth estimate in the case $\al=1$}\label{growth est}

Thanks to Theorem \ref{two phase existence}, we know for a uniformly bounded entire minimizer $u$, the free boundary $\pa\{|u-a_i|>0\}$ $(i=1,2)$ must exist. In this section we will derive a growth rate estimate for $|u-a_i|$ away from the free boundary in the case $\underline{\al=1}$.

From now on we fix $\al=1$ and assume $a_1=0^m$. By the hypothesis H2, $W(u)$ has the form $W(u)=g(u)|u|$ for some $g\in C^2(B_\theta)$. Here $\th$ is the constant in Lemma \ref{nondeg}. Since $u$ is a local minimizer, it satisfies the Euler-Lagrange equation near the free boundary point,

\beq\label{ELeq}
\D u= g(u)\f{u}{|u|}+|u| D_u g(u).
\eeq

Also there exists a positive constant $C>0$ such that $g(u)>C$ when $|u|\leq \theta$. We use the notation
\beq\label{notations}
\Om(u):=\{|u(x)|>0\},\quad \G(u):=\pa^*\Om(u).
\eeq
Here $\pa^*$ denotes De Giorgi's reduced boundary. An easy observation is that for any point $x\in\G(u)$, we must have $|u(x)|=|\na u(x)|=0$. The proof is straightforward: if at some point $x_0\in \G(u)$, $|\na u|>0$, then by continuity of $\na u$ we have that in a small neighborhood $B_{r}(x_0)$, $|\na u_i|\geq c$ for some $1\leq i\leq m$ and $c>0$. The inverse function theorem implies that in $B_r(x_0)$, $\{u_i=0\}$ is a $(n-1)$-dimensional hypersurface, which further gives $x_0\not\in\pa^e\Om(u)$, where $\pa^e$ denotes the measure theoretic boundary. Finally we arrive at a contradiction thanks to the well-known result $\pa^* E\subset \pa^e E$ for any set $E$ of locally finite perimeter. For the definitions of the reduced boundary and the measure theoretic boundary, as well as their relationship, we refer to \cite[Chapter 5.7\&5.8]{book-eg} for details.

We first establish an almost monotonicity formula for $|u|<\th$. The proof closely follows the classical arguments of Weiss (see \cite{Weiss1,Weiss2})

\begin{lemma}\label{weiss lemma}
Let $u$ be a solution of \eqref{ELeq} in $B_r(x_0)$ such that $|u|<\th$ in $B_r(x_0)$, and set
\beq\label{def weiss}
W(u,x_0,r)=\f{1}{r^{n+2}}\int_{B_r(x_0)}\left(\f12|\na u|^2+g(u)|u|\right)\,dx-\f{1}{r^{n+3}}\int_{\pa B_r(x_0)}|u|^2 \,d\mathcal{H}^{n-1}.
\eeq

Then $W(u,x_0,r)$ satisfies
\beq\label{Weiss}
\f{d}{dr} W(u,x_0,r) =r\int_{\pa B_1}|\f{du_r}{dr}|^2\,d\mathcal{H}^{n-1}+2r\int_{B_1}D_ug\cdot u_r|u_r|\,dx
\eeq
where
\beqo
u_r(x):=\f{u(x_0+rx)}{r^2}.
\eeqo
\end{lemma}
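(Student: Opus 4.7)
The strategy is to differentiate the definition \eqref{def weiss} directly, replace interior integrals via two Pohozaev-type identities derived from the Euler--Lagrange equation \eqref{ELeq}, and recognize the resulting surface combination as $r\int_{\partial B_1}|du_r/dr|^{2}$ after rescaling. Writing $J(r):=\int_{B_r(x_0)}e\,dx$ with $e:=\tfrac12|\nabla u|^2+g(u)|u|$ and $H(r):=\int_{\partial B_r(x_0)}|u|^2\,d\mathcal{H}^{n-1}$, so that $W(u,x_0,r)=r^{-n-2}J(r)-r^{-n-3}H(r)$, the only input one needs is $J'(r)=\int_{\partial B_r}e$ (co-area) and the polar-coordinate identity $H'(r)=\tfrac{n-1}{r}H(r)+2\int_{\partial B_r}u\cdot\partial_\nu u$, obtained by writing $H(r)=r^{n-1}\int_{\partial B_1}|u(x_0+r\omega)|^{2}\,d\omega$ and differentiating in $r$.

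The two integrations of \eqref{ELeq} I would use are standard. Multiplying by $(x-x_0)\cdot\nabla u$ and integrating by parts on $B_r(x_0)$, together with the identity $\partial_j[W(u)]=W_u(u)\cdot\partial_j u$, yields the Pohozaev-type identity
\begin{equation*}
r\int_{\partial B_r}e-r\int_{\partial B_r}|\partial_\nu u|^{2}=\tfrac{n-2}{2}\int_{B_r}|\nabla u|^{2}+n\int_{B_r}W(u),
\end{equation*}
while multiplying by $u$ and using $W_u(u)\cdot u=W(u)+|u|D_u g(u)\cdot u$ on $\{|u|>0\}$ gives the energy identity
\begin{equation*}
\int_{\partial B_r}u\cdot\partial_\nu u-\int_{B_r}|\nabla u|^{2}-\int_{B_r}W(u)=\int_{B_r}|u|D_u g(u)\cdot u.
\end{equation*}
Substituting the first into $dW/dr$ collapses the coefficients of $\int_{B_r}|\nabla u|^{2}$ and $\int_{B_r}W(u)$ into a common $-\tfrac{2}{r^{n+3}}$, and the second then rewrites the surviving block $-\tfrac{2}{r^{n+3}}\bigl[\int_{B_r}|\nabla u|^{2}+\int_{B_r}W(u)+\int_{\partial B_r}u\cdot\partial_\nu u\bigr]$ as $-\tfrac{4}{r^{n+3}}\int_{\partial B_r}u\cdot\partial_\nu u+\tfrac{2}{r^{n+3}}\int_{B_r}|u|D_u g\cdot u$. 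The net result is
\begin{equation*}
\frac{dW}{dr}=\frac{1}{r^{n+2}}\int_{\partial B_r}|\partial_\nu u|^{2}-\frac{4}{r^{n+3}}\int_{\partial B_r}u\cdot\partial_\nu u+\frac{4}{r^{n+4}}H(r)+\frac{2}{r^{n+3}}\int_{B_r}|u|D_u g(u)\cdot u.
\end{equation*}

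To recognize the right-hand side in the form asserted by \eqref{Weiss}, a direct computation gives $\frac{du_r}{dr}(x)=\tfrac{1}{r}\bigl(\nabla u_r\cdot x-2u_r\bigr)$. Expanding $|du_r/dr|^{2}$ on $\partial B_1$ and changing variables $y=x_0+r\omega$ matches the first three surface contributions above with $r\int_{\partial B_1}|du_r/dr|^{2}\,d\mathcal{H}^{n-1}$; the change of variables $dy=r^n\,dx$ combined with $|u|=r^{2}|u_r|$ in the interior term yields $2r\int_{B_1}D_u g(u)\cdot u_r|u_r|\,dx$, completing \eqref{Weiss}.

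The only point requiring care is the justification of the two integrations by parts across the free boundary $\Gamma(u)$, where $W_u(u)$ is merely bounded and possibly discontinuous. Since $\alpha=1$, Lemma \ref{EL}(2) provides $\Delta u\in L^\infty_{loc}$ and Remark \ref{c1,alpha remark} gives $u\in C^{1,\gamma}_{loc}$; combined with the reduced-boundary observation $u=|\nabla u|=0$ on $\Gamma(u)$ recalled just before the lemma, all integrands are bounded, the contributions from $\{|u|=0\}$ vanish a.e., and the divergence theorem applies as in the classical smooth case. Once this regularity step is dispatched, the remaining manipulations are pure bookkeeping of two standard Pohozaev identities, so the structure of the argument carries no real obstacle beyond what Section \ref{section 2} has already supplied.
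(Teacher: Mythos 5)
Your proof is correct, but it takes a genuinely different route from the paper's. The paper first passes to the rescaled function $u_r$, writes $W(u,x_0,r)=\int_{B_1}\bigl(\tfrac12|\nabla u_r|^2+g(r^2u_r)|u_r|\bigr)\,dx-\int_{\partial B_1}|u_r|^2\,d\mathcal{H}^{n-1}$, differentiates in $r$ under the integral sign, and then needs only a single integration by parts to bring in $\Delta u_r$ and the boundary term $\int_{\partial B_1}(x\cdot\nabla u_r)\cdot\tfrac{d}{dr}u_r$. Together with $\tfrac{d}{dr}u_r=\tfrac1r(x\cdot\nabla u_r-2u_r)$ the two surface contributions combine immediately into $r\int_{\partial B_1}|du_r/dr|^2$, and substituting the Euler--Lagrange relation for $\Delta u_r$ makes the $g(r^2u_r)$-terms cancel, leaving precisely the $D_ug$ remainder. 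Your argument instead differentiates $r^{-n-2}J(r)-r^{-n-3}H(r)$ in the physical (unscaled) variable and reorganizes the interior integrals via the Pohozaev identity (multiplying by $(x-x_0)\cdot\nabla u$) and the energy identity (multiplying by $u$); I have checked that the intermediate formula
\begin{equation*}
\frac{dW}{dr}=\frac{1}{r^{n+2}}\int_{\partial B_r}|\partial_\nu u|^{2}-\frac{4}{r^{n+3}}\int_{\partial B_r}u\cdot\partial_\nu u+\frac{4}{r^{n+4}}H(r)+\frac{2}{r^{n+3}}\int_{B_r}|u|D_u g(u)\cdot u
\end{equation*}
is correct and does rescale term-by-term to \eqref{Weiss}. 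The paper's rescale-first approach is leaner: the quantity $|du_r/dr|^2$ is produced automatically rather than recognized after the fact, and only one integration by parts is needed versus your two. Your approach, in exchange, stays entirely in physical variables and leans on two textbook identities, which makes the cancellation of coefficients transparent and is arguably more portable to variants of the functional. Both arguments share exactly the same regularity concern across $\Gamma(u)$, and your justification (use $\Delta u\in L^\infty_{loc}$ and $u\in C^{1,\gamma}_{loc}$ from Section \ref{section 2}, note that $\nabla u$ and $\Delta u$ vanish a.e.\ on $\{u=0\}$, so each integration by parts is legitimate and the identities $\nabla[W(u)]=W_u(u)\cdot\nabla u$ and $W_u(u)\cdot u=W(u)+|u|D_ug(u)\cdot u$ hold a.e.) is the right one and mirrors what the paper implicitly relies on.
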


\begin{proof}
First we write $W(u,x_0,r)$ as
\beqo
W(u,x_0,r)=\int_{B_1}\left(\f12|\na u_r|^2+g(r^2 u_r)|u_r|\right)\,dx-\int_{\pa B_1}|u_r|^2\,d\mathcal{H}^{n-1}.
\eeqo
Then by direct calculation we have
\begin{align*}\
&\f{d}{dr}W(u,x_0,r)\\
=&\int_{B_1}\left( \na u_r\cdot\f{d}{dr}(\na u_r)+D_u g(r^2 u_r)\cdot \f{d}{dr}(r^2 u_r)|u_r|+ g(r^2 u_r)|u_r|^{-1}u_r\cdot \f{d}{dr}u_r \right)\,dx\\
& -2\int_{\pa B_1}u_r\cdot\f{d}{dr}u_r\,d\mathcal{H}^{n-1}\\
=&\int_{B_1}\left(-\Delta u_r\cdot \f{d}{dr}u_r+D_u g(r^2 u_r)\cdot \f{d}{dr}(r^2 u_r)|u_r|+ g(r^2 u_r)|u_r|^{-1}u_r\cdot \f{d}{dr}u_r \right)\,dx\\
& -2\int_{\pa B_1}u_r\cdot\f{d}{dr}u_r\,d\mathcal{H}^{n-1}+\int_{\pa B_1}(x\cdot \na u_r)\cdot \f{d}{dr}u_r\,d\mathcal{H}^{n-1}.\\
=&\int_{B_1}\left(-\Delta u_r\cdot \f{d}{dr}u_r+D_u g(r^2 u_r)\cdot \f{d}{dr}(r^2 u_r)|u_r|+ g(r^2 u_r)|u_r|^{-1}u_r\cdot \f{d}{dr}u_r \right)\,dx\\
&+\int_{\pa B_1}r|\f{d}{dr}u_r|^2\,\mathcal{H}^{n-1}.
\end{align*}
Here we have used integration by parts in the second step and the formula $\f{d}{dr}u_r=\f{1}{r}(x\cdot \na u_r-2u_r)$ in the last step. Since $u$ satisfies the equation \eqref{ELeq}, direct computation implies that
\beq\label{ELu_r}
\Delta u_r=\left( g(r^2 u_r)\f{u_r}{|u_r|}+D_u g(r^2 u_r)r^{2}|u_r| \right).
\eeq
Substituting \eqref{ELu_r} into the above identity, we obtain
\begin{align*}
&\f{d}{dr}W(u,x_0,r)-\int_{\pa B_1}r|\f{d}{dr}u_r|^2\,\mathcal{H}^{n-1}\\
=& \int_{B_1}\bigg\{ -\left( g(r^2 u_r) \f{u_r}{|u_r|}+D_u g(r^2 u_r)r^{2}|u_r| \right)\f{d}{dr}u_r\\
&\qquad +D_u g(r^2 u_r)\cdot \f{d}{dr}(r^2 u_r)|u_r|+  g(r^2 u_r)\f{u_r}{|u_r|}\cdot \f{d}{dr}u_r\bigg\}\,dx\\
=&2 r \int_{B_1}D_u g(r^2 u_r)\cdot u_r|u_r|\,dx
\end{align*}
Hence we have proved \eqref{Weiss}.
\end{proof}

\begin{rmk}
For other $\al\in[0,2)$, the analogue result still holds for
\beqo
W(u,x_0,r)=\f{1}{r^{n+2\ka-2}}\int_{B_r(x_0)}\f12|\na u|^2+W(u)\,dx-\f{\ka}{2r^{n+2\ka-1}}\int_{\pa B_r(x_0)}|u|^2 \,d\mathcal{H}^{n-1}.
\eeqo
where $\kappa:=\f{2}{2-\al}$ and $W(u)=g(u)|u|^\al$. The derivative of $W(u,x_0,r)$ is given by
\beqo
\f{d}{dr} W(u,x_0,r) =r\int_{\pa B_1}|\f{du_r}{dr}|^2\,d\mathcal{H}^{n-1}+\ka r^{\ka-1}\int_{B_1}D_ug\cdot u_r|u_r|^\al
\eeqo
For our purpose we only need the statement for $\al=1$. The proof for general $\al\in [0,2)$ is identical and we omit it here.
\end{rmk}

\begin{proposition}\label{prop growth}
Let $\al=1$ and $u$ be a bounded entire minimizer and let $\G(u)$ be as defined in \eqref{notations}. There exist constants $r_0$ and $C$, which only depends on $\|u\|_{L^\infty}$ and the potential function $W(u)$, such that
\beq\label{growth estimate}
|u(x)|\leq C \dist(x,\G(u))^2,\quad |\na u(x)|\leq C\dist(x,\G(u))
\eeq
whenever $\dist(x,\G(u))\leq r_0$.
\end{proposition}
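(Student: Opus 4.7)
The plan is to establish the quadratic bound $|u(x)|\leq Cd(x)^2$ (where $d(x):=\dist(x,\G(u))$) by a blow-up argument at free boundary points, and then deduce the gradient bound by interior elliptic regularity. Throughout, I use that any $y\in\G(u)$ satisfies $u(y)=0$ and $\na u(y)=0$, as observed before Lemma~\ref{weiss lemma}.

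I argue by contradiction. If the quadratic bound fails, there exist $y_k\in\G(u)$ and $r_k\ri 0$ with $M_k:=\sup_{B_{r_k}(y_k)}|u|$ satisfying $M_k/r_k^2\ri\infty$. A standard doubling selection (choosing $r_k$ so that $\sup_{\rho\in[r_k,R_0]}\sup_{B_\rho(y_k)}|u|/\rho^2$ is asymptotically realised at scale $r_k$) lets me assume additionally that $\sup_{B_{Rr_k}(y_k)}|u|\leq CR^2M_k$ for all $R\geq 1$. Setting $w_k(x):=u(y_k+r_kx)/M_k$ yields $|w_k(x)|\leq C|x|^2$ on $\BR^n$, $w_k(0)=0$ and $\sup_{B_1}|w_k|=1$. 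A direct computation from \eqref{ELeq} gives
\beqo
\Delta w_k=\f{r_k^2}{M_k}g(M_kw_k)\f{w_k}{|w_k|}+r_k^2|w_k|D_ug(M_kw_k)
\eeqo
on $\{w_k\neq 0\}$. Since $r_k^2/M_k\ri 0$ and $M_kw_k(x)=u(y_k+r_kx)\ri 0$ locally uniformly (by the uniform $C^{1,\g}$ bound from Remark~\ref{c1,alpha remark} combined with $u(y_k)=0$), the right-hand side tends to zero in $L^\infty_{loc}$.

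Interior $W^{2,p}$ estimates applied to $w_k$ (with $\|w_k\|_{L^\infty(B_R)}\leq CR^2$ and $\|\Delta w_k\|_{L^\infty(B_R)}\ri 0$) give uniform $C^{1,\alpha}_{loc}$ bounds, so Arzel\`a-Ascoli yields a subsequential locally uniform limit $w_\infty$ on $\BR^n$ that is harmonic, satisfies $w_\infty(0)=0$, $\sup_{B_1}|w_\infty|=1$, and $|w_\infty(x)|\leq C|x|^2$. The classical Liouville theorem forces $w_\infty$ to be a vector-valued harmonic polynomial of degree at most $2$. On the other hand, at the reduced-boundary point $y_k\in\pa^*\Om(u)$ the density $\cl^n(\{u=0\}\cap B_r(y_k))/|B_r|\ri 1/2$ as $r\ri 0$; a diagonal argument (possibly refining the choice of $r_k$) rescales this to $\cl^n(\{w_k=0\}\cap B_1)\geq c>0$, and the uniform convergence $w_k\ri w_\infty$ yields $\cl^n(\{w_\infty=0\}\cap B_1)\geq c$. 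A nontrivial vector-valued harmonic polynomial has zero set contained in that of any nonzero component, which is a real-analytic set of Lebesgue measure zero; hence $w_\infty\equiv 0$, contradicting $\sup_{B_1}|w_\infty|=1$.

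Given the quadratic bound, the gradient estimate is immediate: for $x$ with $d=d(x)$ small, the rescaling $\tilde u(z):=u(x+(d/2)z)/d^2$ on $B_1$ has $|\tilde u|\leq C$ and uniformly bounded Laplacian (since $|u|\leq Cd^2$ keeps the right-hand side of \eqref{ELeq} bounded), so interior $W^{2,p}$ estimates give $|\na\tilde u(0)|\leq C$, i.e.\ $|\na u(x)|\leq Cd$. The main difficulty I foresee is the doubling selection that secures the uniform quadratic growth of $|w_k|$ on all of $\BR^n$, together with the diagonal/measure-theoretic passage to the limit of the zero-set density -- both standard in the obstacle-problem literature but requiring care in our vector-valued setting. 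The Weiss formula of Lemma~\ref{weiss lemma}, though not strictly used in the blow-up argument above, could alternatively be exploited to identify the Weiss density of $w_\infty$ and provide a complementary route to the contradiction.
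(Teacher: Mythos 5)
There is a genuine gap in the zero-set density step, and it is where your argument and the paper's actually diverge. You blow up by $w_k(x)=u(y_k+r_kx)/M_k$ and correctly arrive at a limit $w_\infty$ that is a (vector of) homogeneous harmonic quadratic polynomial(s) with $\sup_{B_1}|w_\infty|=1$. To obtain a contradiction you invoke the De Giorgi density $\cl^n(\{u=0\}\cap B_\rho(y_k))/|B_\rho|\to1/2$ as $\rho\to0$, and claim that after a ``diagonal argument'' this yields $\cl^n(\{w_k=0\}\cap B_1)\geq c>0$ uniformly in $k$. But that density is a \emph{pointwise} limit in $\rho$ at each fixed $y_k$; it is not uniform in $y_k$, and the scales $r_k$ have already been pinned down by the failure of the quadratic bound (together with the doubling selection), so you are not free to shrink them further. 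Nothing prevents the contact set from occupying a vanishing fraction of $B_{r_k}(y_k)$ while still having density $1/2$ at much smaller scales around each $y_k$. In other words, a blow-up limit that is a nontrivial harmonic polynomial is perfectly compatible with $y_k\in\pa^*\Om(u)$; the limit along $r_k$ need not be the reduced-boundary blow-up. This is exactly the degeneracy that makes two-phase/vectorial quadratic growth harder than, say, the classical one-phase obstacle problem, where the sign condition supplies the missing contradiction.

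The paper's proof (following \cite{asuw}) removes this obstruction by an orthogonality trick that your scheme does not have: it rescales $v_k=u(x_k+r_k\cdot)/r_k^2$ and then subtracts the $L^2(\pa B_1)$-best approximation $p_{x_k,r_k}\in\ch$ by second-order homogeneous harmonic polynomials \emph{before} normalizing, setting $w_k=(v_k-p_{x_k,r_k})/\sqrt{M_k}$. The Weiss almost-monotonicity of Lemma~\ref{weiss lemma} is then essential, not optional: it supplies both the uniform $W^{1,2}(B_1)$ bound on $w_k$ (the chain of inequalities \eqref{compute w_k}) and the sharp energy inequality $\int_{B_1}|\na w_0|^2\leq 2\int_{\pa B_1}|w_0|^2$ in the limit. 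Combined with the reverse inequality from \cite[Lemma 4.1]{Weiss3}, equality forces $w_0\in\ch$; but by construction $w_0$ is $L^2(\pa B_1)$-orthogonal to $\ch$ (property (5), inherited from \eqref{ortho}), so $w_0=0$, contradicting $\int_{\pa B_1}|w_0|^2=1$. No information about the zero set of $u$ at scale $r_k$ is ever needed. To repair your proof along your own lines you would have to either (i) establish a quantitative lower density bound for $\{u=0\}$ near $\G(u)$ at \emph{all} small scales, which in this vectorial two-phase setting is not available independently of the growth estimate you are trying to prove, or (ii) import the polynomial-subtraction device, at which point you are essentially reproducing the paper's argument and the Weiss formula becomes indispensable rather than ``not strictly used.''
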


\begin{proof}
This proposition and the proof are almost identical to \cite[Theorem 2]{asuw} (except now for a more general potential function). We present the whole argument here for completeness.

The statement of the proposition is equivalent to
\beqo
\sup\limits_{x\in B_r(x_0)}|u(x)|\leq Cr^2,\quad \sup\limits_{x\in B_r(x_0)}|\na u(x)|\leq Cr,
\eeqo
whenever $x_0\in\G(u)$, $r\leq r_0$. By \eqref{ELeq} and the standard theory of elliptic regularity, it suffices to show
\beq\label{growth est int form}
\f{1}{r^n}\int_{B_r(x_0)}|u|\,dx\leq Cr^2,\quad \forall x_0\in \G(u),\; r\leq r_0
\eeq
where $C$ and $r_0$ only depend on $\|u\|_{L^\infty}$ and the potential function $W$.

Note that since $|u|$ is uniformly bounded, we have $u\in C^{1,\g}$, which further implies $|\na u|$ is uniformly bounded. As a result, there is a constant  $r_0$ such that $\dist(x,\G(u))\leq 2r_0$ implies $|u(x)|\leq \theta$, where $\th$ is the constant in Lemma \ref{nondeg}. Also, $W(u(x))$ has the form $g(u(x))|u(x)|$ for some smooth function $g(u)\geq C>0$ when $\dist(x,\G(u))\leq 2r_0$.

Since $|\na u|$ is bounded and $r_0$ is a constant, we have that $W(u,x_0,r_0)$ is uniformly bounded by some constant $C_1$ independent of $u$ and $x_0$. Here $W(u,x_0,r_0)$ is the quantity defined in \eqref{def weiss}. Using Lemma \ref{weiss lemma}, we compute for $r<r_0$
\beq\label{compute integral growth}
\begin{split}
\f{1}{r^{n+2}}\int_{B_r(x_0)}g(u)|u|\,dx& =W(u,x_0,r)-\f{1}{r^{n+2}}\int_{B_r(x_0)}\f12|\na u|^2\,dx\\
&\qquad +\f{1}{r^{n+3}}\int_{\pa B_r(x_0)}|u|^2\,d\ch^{n-1}\\
&=W(u,x_0,r)-\f{1}{r^{n+2}}\int_{B_r(x_0)}\f12|\na (u-p(x-x_0))|^2\,dx\\
&\qquad +\f{1}{r^{n+3}}\int_{\pa B_r(x_0)}|u-p(x-x_0)|^2\,d\ch^{n-1}\\
&\leq W(u,x_0,r_0)+\int_{r}^{r_0}2s\int_{B_1} |D_ug||\f{u(x_0+sx)}{s^2}|^2\,dx\,ds\\
&\qquad +\f{1}{r^{n+3}}\int_{\pa B_r(x_0)}|u-p(x-x_0)|^2\,d\ch^{n-1},
\end{split}
\eeq
for every $p(x)\in\mathcal{H}$, where $\ch$ is defined by
\beqo
\begin{split}
\ch:=&\{p(x): \; p(x)=(p_1(x),...p_m(x)), \text{ each }p_i(x) \text{ is a }\\
 &\quad  \text{homogeneous
harmonic polynomial of second order.} \}
\end{split}
\eeqo
We would like to point out that the homogeneity and harmonicity of $p(x)$ is used in the second equality of \eqref{compute integral growth}.

We already know that the first term in the last step of \eqref{compute integral growth} is bounded by a constant $C_1$ independent of $u$ and $x_0$. For the second term, since $u(x_0+x)\leq C|x|^{\f53}$ when $|x|\leq r_0$ by the $C^{1,\f23}$ regularity (c.f. Remark \ref{c1,alpha remark} and observation below \eqref{notations}), we have
\beqo
\int_{r}^{r_0}2s\int_{B_1} |D_ug||\f{u(x_0+sx)}{s^2}|^2\,dx\,ds\leq C\int_r^{r_0}s^{-3}\int_{B_1} |sx|^{\f{10}{3}}\,dx\,ds\leq C_2
\eeqo
for some constant $C_2$. Because $g(u)\geq C>0$ in $B_r(x_0)$, in order to prove \eqref{growth est int form}, it suffices to show that there is a constant $C_3$, independent of $u$ and $x_0$, such that for any $x_0\in\G(u)$ and $r\leq r_0$,
\beq\label{last term bound}
\min\limits_{p\in\ch} \f{1}{r^{n+3}}\int_{\pa B_r(x_0)}|u-p(x-x_0)|^2\,d\ch^{n-1}\leq C_3.
\eeq

Let $p_{x_0,r}$ be the minimizer of the integral $\int_{\pa B_r(x_0)}|u-p(x-x_0)|^2\,d\ch^{n-1}$ among $p\in\ch$. Then $p_{x_0,r}$ satisfies
\beq\label{ortho}
\int_{\pa_{B_r}(x_0)}(u(x)-p_{x_0,r}(x-x_0))\cdot q(x-x_0)\,d\ch=0\quad \forall q\in\ch.
\eeq

Suppose by contradiction that \eqref{last term bound} is not true, then there is a sequence of entire minimizers $\{u_k\}$ (uniformly bounded), a sequence of points $x_k\in \G(u_k)$ as well as a sequence of radii $r_k\ri 0$ such that
\beqo
M_k:=\f{1}{r_k^{n+3}}\int_{\pa B_{r_k}(x_k)}|u_k-p_{x_k,r_k}(x-x_k)|^2\,d\ch^{n-1}\ri\infty.
\eeqo

Define
\beqo
v_k(x):=\f{u_k(x_k+r_kx)}{r_k^2},\qquad w_k:=\f{v_k-p_{x_k,r_k}}{\sqrt{M_k}}.
\eeqo
Then we immediately get
\beqo
\int_{\pa B_1(0^n)}|w_k|^2\,d\ch^{n-1}=1
\eeqo
and we have
\beq\label{compute w_k}
\begin{split}
&\int_{B_1(0^n)} \f12|\na w_k|^2\,dx-\int_{\pa B_1(0^n)}|w_k|^2\,d\ch^{n-1}\\
=&M_k^{-1}\left( \int_{B_1(0^n)} \f12|\na (v_k-p_{x_k,r_k})|^2\,dx-\int_{\pa B_1(0^n)}|v_k-p_{x_k,r_k}|^2\,d\ch^{n-1} \right)\\
=&M_k^{-1}\left( \int_{B_1(0^n)} \f12|\na v_k|^2\,dx-\int_{\pa B_1(0^n)}|v_k|^2\,d\ch^{n-1} \right)\\
\leq &M_k^{-1} W(u_k,x_k,r_k)\\
\leq & M_k^{-1}\left(  W(u_k,x_k,r_0)+\int_{r_k}^{r_0} 2s \int_{B_1} |D_u g||\f{u_k(x_k+sx)}{s^2}|^2 \,dx\,ds\right)\\
\ri & 0\quad \text{as }k\ri\infty.
\end{split}
\eeq

So $w_k$ is uniformly bounded in $W^{1,2}(B_1)$. Also we note that by \eqref{eleq for al=1} each $w_k$ satisfies the equation
\beqo
\D w_k=\f{1}{\sqrt{M_k}} \left( \f{v_k}{|v_k|}g(u_k)+D_ug(u_k) \right)\chi_{\{|u_k|>0\}},
\eeqo
which implies
\beqo
|\D w_k|\leq \f{C}{\sqrt{M_k}}\ri 0,\quad \text{as }k\ri\infty.
\eeqo
By Schauder estimates, $w_k$ is uniformly bounded in $C_{loc}^{1,\g}(B_1)$ for any $\g< 1$. Therefore we can extract a subsequence, still denoted by $w_k$, that converges to $w_0$ with the following properties
\begin{enumerate}
\itemsep0.5em
\item $w_k\ri w_0$ weakly in $H^1(B_1)$, strongly in $L^2(\pa B_1)$, $\int_{\pa B_1} |w_0|^2\,d\ch^{n-1}=1$.
\item $w_k\ri w_0$ in $C^{1,\g}_{loc}(B_1)$ for any $\g<1$;
\item $\D w_0=0$;
\item $|w_0(0^n)|=|\na w_0(0^n)|=0$;
\item $\int_{\pa B_1} w_0\cdot q\,d\ch^{n-1}=0$ for any $q\in\ch$. This property follows from \eqref{ortho}.
\end{enumerate}

By \cite[Lemma 4.1]{Weiss3}, we know that for any $w_0$ satisfying (3) and (4),
\beqo
\int_{B_1}|\na w_0|^2\,dx\geq 2\int_{\pa B_1}|\na w_0|^2\,d\ch^{n-1}.
\eeqo
On the other hand, from \eqref{compute w_k} we know
\beqo
\int_{B_1}|\na w_0|^2\,dx\leq  2\int_{\pa B_1}|\na w_0|^2\,d\ch^{n-1}.
\eeqo
Therefore we have $\int_{B_1}|\na w_0|^2\,dx= 2\int_{\pa B_1}|\na w_0|^2\,d\ch^{n-1}$ which implies (again by \cite[Lemma 4.1]{Weiss3}) that each component of $w_0$ is a homogeneous harmonic polynomial of second order, i.e. $w_0\in \ch$. This is in contradiction with properties (1) and (5). The proof is complete.
\end{proof}

\section{$(n-1)$-Hausdorff measure of the free boundary for $\al=1$} \label{section 5}

In this section, we continue working with the potential function $W(u)$ satisfying H1 and H2 with $\underline{\al=1}$. Assume $u$ is a bounded entire minimizer of the energy $J$. We would like to study the $(n-1)$-Hausdorff measure of $\pa^* I_0$ and prove the second part of Theorem \ref{main}, i.e. the inequality \eqref{free bdy lower bdd}, \eqref{est free bdy}.

Firstly we focus on the local estimate of $\pa^* \{u=a_i\}$ and we use the same notations and assumptions as in Section \ref{growth est}. Take $a_1=0^m$ and $W(u)=g(u)|u|$ for some $g\in C^2(B_\theta)$, $\th$ as in Lemma \ref{nondeg}. $u$ satisfies the Euler-Lagrange equation \eqref{ELeq}.

Thanks to the growth estimate \eqref{growth est} and the non-degeneracy Lemma \ref{nondeg}, we have for every $x_0\in\G(u)$ (recall that $\G(u)$ is defined in \eqref{notations}) and small $r$,
\begin{align}
\label{control of u}c_1r^2& \leq \sup\limits_{x\in B_r(x_0)}|u(x)|\leq c_2r^2,\\
\label{control of nablau} c_1r &\leq \sup\limits_{x\in B_r(x_0)}|\na u(x)|\leq c_2r.
\end{align}

\begin{thm}\label{local estimate}(Local estimate of $\G$)
There are constants $r_0$ and $C_0$ such that
\beq\label{bdy_meas:local}
\mathcal{H}^{n-1}(\G(u)\cap B_{r_0}(z))\leq C_0\quad  \text{for every }\;z\in \G(u).
\eeq
\end{thm}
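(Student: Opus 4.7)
My plan is to adapt the Vitali-covering with blow-up strategy used for the vectorial two-phase obstacle problem in \cite{asuw}, combining the sharp growth and non-degeneracy estimates \eqref{control of u}--\eqref{control of nablau} with the Weiss-type monotonicity of Lemma \ref{weiss lemma}. The overarching goal is to establish the uniform upper $(n-1)$-density bound
\beqo
\limsup_{\rho\to 0^+}\f{\ch^{n-1}(\G(u)\cap B_\rho(y))}{\rho^{n-1}}\leq C,\quad y\in\G(u)\cap B_{r_0}(z),
\eeqo
with $C$ independent of $y$; then \eqref{bdy_meas:local} follows by a standard covering argument from geometric measure theory.

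The key step is a blow-up analysis at $y\in\G(u)$. Setting $u_\rho(x):=u(y+\rho x)/\rho^2$, the estimates \eqref{control of u}--\eqref{control of nablau} give uniform boundedness of $\{u_\rho\}$ in $C^{1,\g}_{\mathrm{loc}}(\BR^n)$, so a subsequential limit $u_0$ exists in $C^1_{\mathrm{loc}}$. Combined with the uniform bound on $W(u,y,r)$ provided by the growth estimates, the Weiss identity \eqref{Weiss} gives
\beqo
\int_0^{r_0}r\int_{\pa B_1}\left|\f{du_r}{dr}\right|^2\,d\ch^{n-1}\,dr<\infty,
\eeqo
so that $\pa_r u_r\to 0$ in $L^2(\pa B_1)$ along a subsequence, whence the limit $u_0$ is $2$-homogeneous and, by Lemma \ref{EL}, solves the blow-up equation $\D u_0=g(0^m)u_0/|u_0|$ on $\Om(u_0)$, the constant-coefficient vectorial two-phase obstacle equation.

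Classification of nontrivial $2$-homogeneous solutions of this limit equation --- possible precisely because $\al=1$ makes the blow-up equation classical --- together with non-degeneracy (Lemma \ref{nondeg}) excluding the trivial $u_0\equiv 0^m$, yields a universal upper bound $\ch^{n-1}(\G(u_0)\cap B_1)\leq C_*$. Transferring this back to $u$ via the $C^1_{\mathrm{loc}}$-convergence of $u_\rho$ to $u_0$ gives the pointwise upper-density bound. Uniformity in $y$ is inherited from the uniformity of the growth constants in Proposition \ref{prop growth}.

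\textbf{Main obstacle.} The principal technical difficulty is the transfer step: proving $\limsup_{\rho\to 0^+}\ch^{n-1}(\G(u)\cap B_\rho(y))/\rho^{n-1}\leq \ch^{n-1}(\G(u_0)\cap B_1)$ under $C^1_{\mathrm{loc}}$ convergence $u_\rho\to u_0$. This requires a quantitative no-collapse / upper-semicontinuity statement for the free boundaries of vector minimizers along smooth convergence of the defining maps, which in turn relies on the non-degeneracy Lemma \ref{nondeg} applied uniformly across scales, together with the $C^{1,\g}$ regularity from Section \ref{section 2}. A secondary hurdle is that the classification of $2$-homogeneous blow-ups in the vectorial setting is strictly more delicate than the scalar Alt-Phillips case and requires a careful adaptation of the arguments of \cite{asuw}.
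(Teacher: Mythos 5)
Your plan takes a genuinely different route from the paper, and unfortunately it runs into a gap that you yourself flag but that is fatal rather than merely technical. The transfer step --- deducing $\limsup_{\rho\to 0^+}\ch^{n-1}(\G(u)\cap B_\rho(y))/\rho^{n-1}\leq \ch^{n-1}(\G(u_0)\cap B_1)$ from $C^1_{\mathrm{loc}}$ convergence $u_\rho\to u_0$ --- is not a statement one can expect to hold in general. The $(n-1)$-Hausdorff measure of a free boundary is \emph{not} upper semicontinuous under $C^1$ convergence of the defining maps: non-degeneracy (Lemma \ref{nondeg}) prevents $\G(u_\rho)$ from disappearing, but it does nothing to rule out wild oscillation or proliferation of the free boundary at scale $\rho$, which would make $\ch^{n-1}(\G(u_\rho)\cap B_1)$ much larger than $\ch^{n-1}(\G(u_0)\cap B_1)$. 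Any quantitative ``no-collapse'' statement of the type you need would already encode a uniform perimeter bound on $\G(u_\rho)$ --- which is precisely what you are trying to prove, so the argument is circular. A secondary but also serious gap is the classification of $2$-homogeneous blow-ups in the vectorial setting: this is an open and delicate problem, and even a classification would not immediately give a uniform bound on $\ch^{n-1}(\G(u_0)\cap B_1)$ over the family of admissible blow-ups.

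The paper's proof follows the entirely different strategy of Weiss \cite{Weiss3}, and does not rely on blow-up classification at all. Writing $v_i=\pa_{x_i}u$, one tests the differentiated Euler--Lagrange equation with $\psi_\e(|v_i|)\f{v_i}{|v_i|}\phi$ (where $\psi_\e$ is a Lipschitz truncation and $\phi$ a spatial cutoff). Integration by parts, a Cauchy--Schwarz cancellation that makes the leading singular term nonnegative, and a careful treatment of the boundary integral on $\pa\{|u|>\d\}$ give the uniform estimate
\[
\int_{\S_\e}|\na|v_i||^2\,dx\leq C(g,r_0)\,\e,\qquad \S_\e:=\{x\in B_{r_0}(z)\cap\{|u|>0\}:\ |\na u|<\e\}.
\]
A compactness argument (Lemma \ref{vi energy}, in the spirit of your blow-up but with a much softer conclusion) shows conversely that $\sum_i\int_{B_\e(z')\cap\Om(u)}|\na|v_i||^2\,dx\geq C\cl^n(B_\e(z'))$ for every $z'\in\G(u)$. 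Feeding these two bounds into a Besicovitch covering of $\G(u)\cap B_{r_0}(z)$ by balls of radius $\e$ and sending $\e\to 0$ yields \eqref{bdy_meas:local}. The point is that the covering argument packages the local information directly into a Minkowski-content-type bound on $\G(u)$, sidestepping entirely the upper-semicontinuity and classification issues that block your route.
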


\begin{proof}

Take the constant $r_0$ such that for any $x$ that satisfies $\mathrm{dist}(x,\G(u))\leq 2r_0$, $|u(x)|\leq \th$. We will fix a ball $B_{2r_0}(z)$ for some $z\in \G(u)$ in the rest of the proof.

We define
\beqo
v_i:=\pa_{x_i} u,\;\; i=1,2,...,n,\qquad \S_\e(u):=\{x\in B_{r_0}(z)\cap\{|u|>0\}: |\na u|<\e\}.
\eeqo
By differentiating the Euler-Lagrange equation \eqref{ELeq}, formally we have
\begin{equation}\label{equation_for_vi}
\begin{split}
\D v_i=&|u|^{-1}g(u)v_i+ |u|^{-1}(D_ug\cdot v_i)u )-|u|^{-3}g(u)(v_i\cdot u)u\\
&+|u|^{-1}(v_i\cdot u)D_ug+|u|(D^2_u g\cdot v_i).
\end{split}
\end{equation}
Take the function $\psi_\e(x):\BR^+\ri [0,1]$ defined by
\beqo
\psi_\e(x)=\begin{cases}
1, & x\geq \e,\\
\f{x}{\e}, & x\in [0,\e).
\end{cases}
\eeqo
We also choose a smooth cut-off function $\phi \in C_c^{\infty }(B_{2r_0}(z),\BR)$ such that
\beqo
\phi\equiv 1 \text{ in }B_{r_0}(z),\quad |\na \phi|\leq \f{C}{r_0}
\eeqo

Let
\beqo
\ta:=B_{2r_0}(z)\cap \{|u|>0\}.
\eeqo
The key of the proof is to estimate the following integral
\beq\label{finite-perimeter-integral}
I:= \int_{\ta}\na v_i\cdot \na\left[ \psi_\e(|v_i|)\f{v_i}{|v_i|} \phi \right]\,dx,
\eeq
from which estimate \eqref{meas est:Sigma_e} below follows.

\textbf{Claim. }There exists a constant $C(g,r_0)$, which is independent of $\e,\,z$, such that
\beq\label{claim est I}
I\leq C(g,r_0).
\eeq
\begin{proof}[Proof of the Claim]
Define
\beqo
\eta:=\psi_\e(|v_i|)\f{v_i}{|v_i|}\phi
\eeqo
We first show that $\eta\in W_0^{1,2}(B_{2r_0}(z),\BR^m)$. Indeed, by direct computation we have
\begin{align*}
\pa_j \eta&= \psi_\e'(|v_i|)\pa_j|v_i|\f{v_i}{|v_i|}\phi+\psi_\e(|v_i|)\f{\pa_j v_i}{|v_i|}\phi\\
&\quad -\psi_\e(|v_i|)v_i\f{\pa_j|v_i|}{|v_i|^2}\phi+\psi_\e(|v_i|)\f{v_i}{|v_i|}\pa_j\phi
\end{align*}
By definitions of $\psi_\e$, $\phi$ and the $W^{2,2}$ estimate of $u$, the right-hand side is $L^2$-integrable. Combining with the fact that $\phi\in C_0^{\infty}(B_{2r_0}(z))$, we get $\eta\in W_0^{1,2}(B_{2r_0}(z),\BR^m)$.

We notice that since $\D v_i$ is very singular when $|u|\ri 0$, so we can not directly perform integration by parts by moving all the derivatives on $v_i$ in domain $\ta$. Instead, we will switch $\pa_i$ and $\nabla$.

For any $f\in C_0^\infty(B_{2r_0}(z),\BR^m)$, by integration by parts we have
\beqo
\int_{B_{2r_0}(z)}\na v_i\cdot\na f\,dx=\int_{B_{2r_0}(z)} \Delta u\cdot \pa_if\,dx
\eeqo
This can be generalized to the vector-valued function $\eta$ in $W_0^{1,2}(B_{2r_0}(z),\BR^m)$, so we get
\beq\label{ibp}
\int_{\ta}\na v_i\cdot\na \eta\,dx=\int_{B_{2r_0}(z)} \na v_i\cdot\na \eta\,dx=\int_{B_{2r_0}(z)} \D u\cdot \pa_i\eta\,dx=\int_{\ta} \D u\cdot \pa_i\eta\,dx
\eeq
Above we have exploited the fact that $D^2 u$ and $\na \eta$ vanish almost everywhere on $\{|u|=0\}$. So it suffices to prove
\beq\label{claim after ibp}
\int_{\ta} \D u\cdot \pa_i\eta \,dx\leq C(g,r_0).
\eeq

We define the set $\ta_\delta:=B_{2r_0}(z)\cap \{|u|>\d\}$, it is obvious that $\ta_\d\subset \ta$ for any $\d>0$ and $\ta=\lim_{\d\ri 0 } \ta_\d$. Then we have
\begin{align}
\nonumber &\int_{\ta} \D u\cdot \pa_i\eta\,dx\\
\nonumber=&\lim\limits_{\d\ri 0} \int_{\ta_\d} \D u\cdot\pa_i \eta\,dx\\
\label{two parts}=&\lim\limits_{\d\ri 0} \int_{\ta_\d} -\D v_i\cdot \eta\,dx+\lim\limits_{\d\ri 0} \int_{\pa \ta_\d} \Delta u\cdot \eta \g_i d\s
\end{align}

For the first term in \eqref{two parts}, we further compute
\begin{align}
\nonumber &-\int_{\ta_\d} \D v_i\cdot [\psi_\e(|v_i|)\f{v_i}{|v_i|}\phi]\,dx\\
\label{estimate:integral}=&-\int_{\ta_\d} \psi_{\e}(|v_i|)|v_i|^{-1}\phi \bigg( |u|^{-1}g(u)|v_i|^2-|u|^{-3}g(u)(v_i\cdot u)^2\\
\nonumber &\qquad\qquad\qquad \qquad  +2|u|^{-1}(D_ug\cdot v_i)(u\cdot v_i)+|u|(v_i\cdot D^2_ug\cdot v_i) \bigg)\,dx
\end{align}

By the Cauchy-Schwartz inequality,
\beqo
|u|^{-1}g(u)|v_i|^2-|u|^{-3}g(u)(v_i\cdot u)^2\geq 0.
\eeqo
Substituting this into \eqref{estimate:integral} gives
\beq
\begin{split}
\label{First part:bounded by C}&\int_{\ta_\d}-\D v_i\cdot \eta\,dx \\
\leq &\bigg| \int_{\ta_\d} \psi_{\e}(|v_i|)|v_i|^{-1}\phi \bigg( 2|u|^{-1}(D_ug\cdot v_i)(u\cdot v_i)+|u|(v_i\cdot D^2_ug\cdot v_i) \bigg)\,dx\bigg|\\ \leq &C(g, r_0).
\end{split}
\eeq
The integral is bounded by a constant $C(g,r_0)$ (doesn't depend on the choice of $z, \,\d,\, \e$) because $|v_i|$, $D_u(g)$, $D_u^2g$, $u$ are all uniformly bounded by a constant in $B_{2r_0}(z)$.

For the second part in \eqref{two parts}, we apply \eqref{ELeq} to obtain
\beq
\begin{split}\label{second part}
&\int_{\pa \ta_\d} \D u\cdot \eta\gamma_i d\s\\
=&\int_{\pa\{|u|>\d\}\cap B_{2r_0}(z)}\D u\cdot \eta\g_i d\s\\
=&\int_{\pa\{|u|>\d\}\cap B_{2r_0}(z)}\left(g(u)\f{u}{|u|}+|u|D_ug(u)\right)\left( \psi_\e(|v_i|)\f{v_i}{|v_i|}\phi) \right) \g_i d\s\\
=&\int_{\pa\{|u|>\d\}\cap B_{2r_0}(z)} g(u)\pa_i|u|\f{\psi_\e(|v_i|)}{|v_i|} \phi \g_i d\s+ \int_{\pa\{|u|>\d\}\cap B_{2r_0}(z)}|u|\pa_i g(u)\f{\psi_\e(|v_i|)}{|v_i|} \phi \g_i d\s\\
=&:\mathrm{I}+\mathrm{II}
\end{split}
\eeq

We notice that on $\pa\{|u|>\delta\}$, if $\left|\nabla |u|\right|\neq 0$, then the outer normal vector can be written as $\g=\f{-\na |u|}{\left| \na |u|\right|}$, so we obtain that $\mathrm{I}\leq 0$.

For the term $\mathrm{II}$, we perform integration by parts again to get
\beq
\begin{split}\label{estimate:bdy int}
\lim\limits_{\d\ri 0}\mathrm{II}\leq &\lim\limits_{\d\ri 0}\d
\left|\int_{\pa\{|u|>  \delta\}\cap B_{2r_0}(z)} \pa_ig(u)\f{\psi_\e(|v_i|)}{|v_i|}\phi\g_i\,d\sigma\right|\\
\leq &\lim\limits_{\d\ri 0}\delta \int_{\{|u|>\delta|\cap B_{2r_0}(z)\}}\left|\pa_i(\pa_ig(u)\f{\psi_\e(|v_i|)}{|v_i|}\phi)\right|\,dx=0
\end{split}
\eeq
We note that in the last step of \eqref{estimate:bdy int}, the limit is zero since it is the multiplication of $\d$ (goes to zero) and a bounded integral (the bound depends on $\e$, but doesn't depend on $\d$).

Combining \eqref{estimate:integral}, \eqref{First part:bounded by C}, \eqref{second part} and \eqref{estimate:bdy int} will conclude the proof of the Claim.

\end{proof}

On the other hand, we compute
\begin{align*}
I&=\int_{\ta}\big(\na v_i \na\psi_\e(|v_i|)\f{v_i}{|v_i|}\phi\big)+\big( \na v_i\cdot \na(\f{v_i}{|v_i|})\psi_\e(|v_i|)\phi \big)\\
&\qquad\qquad\qquad  +\big( \na v_i \f{v_i}{|v_i|}\na\phi \psi_\e(|v_i|) \big)\,dx\\
&=\f{C}{\e}\int_{\ta\cap\{0<|v_i|<\e\}} |\na |v_i||^2\phi\,dx\\
&\qquad + \int_{\ta\cap \{|v_i|>0\}} \left( |v_i|^{-1}|\na v_i|^2- |v_i|^{-1}|\na|v_i||^2\right)\psi_\e(|v_i|)\phi \,dx\\
&\qquad +\int_{\ta}\left( \na|v_i|\na\phi\psi_\e(|v_i|) \right)\,dx
\end{align*}
Note that we have
\beqo
\begin{split}
&\qquad\qquad\qquad |v_i|^{-1}|\na v_i|^2- |v_i|^{-1}|\na|v_i||^2\geq 0,\\
&\int_{\ta}\left( \na|v_i|\na\phi\psi_\e(|v_i|) \right)\,dx\leq (\int_{\ta}|\na |v_i||^2)^{\f12}(\int_{\ta} |\na \phi\psi_\e(|v_i|)|^2)^{\f12}\leq C(r_0).
\end{split}
\eeqo
Combining with \eqref{claim est I}, we conclude that
\beq\label{meas est:Sigma_e}
\int_{\S_\e}|\na|v_i||^2\,dx\leq C(g,r_0)\e, \quad \forall \e<<1.
\eeq

We need the following lemma.
\begin{lemma}\label{vi energy}
There are constants $\e_0$ and $C$ such that for every $\e\leq \e_0$,
\beqo
\sum\limits_{i=1}^n\int_{B_\e(z)\cap\Om(u)}|\na|v_i||^2\,dx\geq C\cl^n(B_{\e}(z)),\quad \forall z\in\G(u).
\eeqo
\end{lemma}
\begin{proof}
Recall that $\Om(u)=\{|u|>0\}$. If the statement is false, there exists a sequence of uniformly bounded entire minimizers $\{u_j\}_{j=1}^\infty$, $\{z_j\in\G(u_j)\}$,  $\{\e_j\}$ as well as $\{C_j\}$ such that
\begin{align}
&\nonumber\qquad\qquad \lim\limits_{j\ri\infty}\e_j=0,\quad \lim\limits_{j\ri\infty} C_j=0,\\
&\label{contradict1}\sum\limits_{i=1}^n\int_{B_{\e_j}(z_j)\cap\Om(u)}|\na|\pa_iu_j||^2\,dx< C_j\cl^n(B_{\e_j}(z_j)).
\end{align}
Define $f_j(x): B_1(0^n)\ri \BR^m$ as
\beqo
f_j(x):=\f{u_j(z_j+\e_jx)}{\e_j^2}.
\eeqo
Then by Proposition \ref{prop growth}, \eqref{control of u} and \eqref{contradict1}, we have
\begin{enumerate}
\itemsep0.5em
\item $|f_j(0^n)|=|\na f_j(0^n)|=0\quad \text{for every }j$,
\item $\|f_j\|_{C^{1,\g}(B_1(0^n))} (\g<1)$ is uniformly bounded.
\item $\sum\limits_i^n \int_{B_1}|\na |\pa_i f_j||^2\,dx\leq C_j\om_n,$
\item $\sup_{B_{\f12}(0^n)} |f_j(x)|\geq C>0$ for some constant $C$.
\end{enumerate}
Using all these properties, we can get the following convergence results up to some subsequence,
\begin{align}
&\nonumber \qquad\qquad\quad f_j\ri f \text{ in }C^{1}(B_{1}(0^n)),\\
&\label{convergence1}|f(0^n)|=|\na f(0^n)|=0,\quad \sum\limits_i^n \int_{B_1}|\na |\pa_i f||^2\,dx=0. \\
& \label{convergence2}\quad \qquad\qquad \sup_{B_{\f12}(0^n)} |f(x)|\geq C>0.
\end{align}
Note that \eqref{convergence1} implies that $f\equiv 0$ in $B_1(0^n)$, which yields a contradiction with \eqref{convergence2}. The proof of Lemma \ref{vi energy} is complete.
\end{proof}

According to Besicovitch covering lemma, we can find a covering of $\G(u)\cap B_{r_0}(z)$ by a finite family of balls $\{B_j\}_{j\in J}$, such that each ball is of radius $\e$ and centered on $\G(u)\cap B_{r_0}$, and no more than $N_n$ balls from this family overlap, where $N_n$ is independent of $\e$ and of the set $\G(u)\cap B_{r_0}$. By the estimate \eqref{control of nablau}, we have $B_{\e}(z)\cap\Om(u)\subset \S_{C\e}$ for some constant $C$. Consequently, we obtain
\begin{align*}
\sum\limits_{j\in J}\cl^n(B_j)&\leq C \sum\limits_{j\in J}\sum\limits_{i=1}^n\int_{B_j\cap\{|u|>0\}}|\na(|v_i|)|^2\,dx\quad \text{(by Lemma \ref{vi energy})}\\
&\leq C\sum\limits_{i=1}^n\int_{\S_{C\e}}|\na(|v_i|)|^2\,dx\leq C(g,r_0)\e \quad \text{(by \eqref{meas est:Sigma_e}).}
\end{align*}
This implies
\beqo
\sum\limits_{j\in J}\e^{n-1}\leq C(g,r_0),
\eeqo
for some constant $C(n,r_0)$ independent of the choice of $z$. Finally letting $\e\ri 0$, we get
\beqo
\mathcal{H}^{n-1}(\G(u)\cap B_{r_0}(z))\leq C(g,r_0).
\eeqo
The proof of Theorem \ref{local estimate} is complete.
\end{proof}

\begin{rmk}\label{local_rmk}
Using Theorem \ref{local estimate}, we can prove that for any $R$, there exists $C(R)$ such that
\beqo
\ch^{n-1}(B_R(x)\cap \G(u))\leq C(R)
\eeqo
To prove this, one simply covers the set $B_R(x)\cap \G(u)$ by identical small balls $\{B_{r_0}(z_i)\}$ such that $z_i\in \G(u)$ for every $i$. We omit the details.
\end{rmk}

Now we use the local estimate Remark \ref{local_rmk} and Theorem \ref{first part main theorem} to prove the global estimate of the $\mathcal{H}^{n-1}$ measure of the free boundary $\pa^* I_0$.

\begin{thm}[Second part of Theorem \ref{main}]\label{global estimate}
Let $\al\in (0,2)$, $x_0\in \BR^n$. There are constants $c_1,r_0$ such that for any $r>r_0$,
\beqo
\mathcal{H}^{n-1}(\pa^* I_0 \cap B_r(x_0))\geq c_1r^{n-1}.
\eeqo
And when $\al=1$, there are constants $c_2, r_0$ such that for $r\geq r_0$,
\beqo
\mathcal{H}^{n-1}(\pa^* I_0 \cap B_r(x_0))\leq c_2r^{n-1}.
\eeqo
\end{thm}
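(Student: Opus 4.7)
\emph{Strategy.} My plan is to reduce both inequalities to estimates on the reduced boundaries of the individual phase sets $\{u = a_i\}$. The key observation is a local identification: at every $z \in \pa I_0$, continuity of $u$ together with the discreteness of $A$ force $u(z) = a_i$ for a unique $i \in \{1,\dots,N\}$, and $u$ stays within a small $\theta$-neighborhood of $a_i$ on some ball $B_\rho(z)$, so that $\chi_{I_0} \equiv \chi_{\{u \neq a_i\}}$ on $B_\rho(z)$. Consequently $\pa^* I_0 \cap B_\rho(z) = \pa^* \{u = a_i\} \cap B_\rho(z)$, and globally we obtain the disjoint decomposition
\[
\pa^* I_0 = \bigsqcup_{i=1}^{N} \pa^* \{u = a_i\}.
\]

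\emph{Lower bound ($\al \in (0,2)$).} For $R \geq R_0$, Theorem \ref{two phase existence} furnishes two phases $a_i$ and $a_j$ with $\cl^n(\{u = a_k\} \cap B_R) \geq c R^n$ for $k = i, j$. Setting $E := \{u = a_i\}$, both $\cl^n(E \cap B_R) \geq c R^n$ and $\cl^n(B_R \setminus E) \geq \cl^n(\{u = a_j\} \cap B_R) \geq cR^n$; the relative isoperimetric inequality in $B_R$ therefore yields
\[
c R^{n-1} \leq C(n) \, \ch^{n-1}(\pa^* E \cap B_R).
\]
Combining with the decomposition above, $\ch^{n-1}(\pa^* I_0 \cap B_R) \geq \ch^{n-1}(\pa^* \{u = a_i\} \cap B_R) \geq c_1 R^{n-1}$.

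\emph{Upper bound ($\al = 1$).} I would apply Besicovitch's covering theorem to cover $\pa^* I_0 \cap B_R$ by balls $\{B_{r_0}(z_k)\}_{k \in K}$ with $z_k \in \pa^* I_0$ and with overlap bounded by a dimensional constant $N_n$. Each $z_k$ satisfies $u(z_k) = a_{i(k)}$, and after fixing $r_0$ small enough once and for all the local identification holds on $B_{r_0}(z_k)$; Theorem \ref{local estimate} (applied to $a_{i(k)}$, the proof being symmetric in the phases) then gives $\ch^{n-1}(\pa^* I_0 \cap B_{r_0}(z_k)) \leq C_0$. To bound $|K|$ I establish the volume non-degeneracy
\[
\cl^n(I_0 \cap B_{r_0}(z_k)) \geq c r_0^n
\]
by combining Lemma \ref{nondeg} (which supplies a point $y \in B_{r_0}(z_k)$ with $|u(y) - a_{i(k)}| \geq c r_0^2$) with the growth estimate of Proposition \ref{prop growth} ($|\nabla u| \leq C r_0$ on $B_{2r_0}(z_k)$), so the oscillation of $u$ on a ball of radius $\sim r_0$ around $y$ is dominated by $c r_0^2 /2$, keeping $u - a_{i(k)}$ away from $0$ and $u$ away from the other phases on that small ball, which therefore lies inside $I_0$. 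Summing against Theorem \ref{first part main theorem} with the bounded-overlap property,
\[
c r_0^n |K| \leq N_n \, \cl^n(I_0 \cap B_{2R}) \leq c R^{n-1},
\]
so $|K| \lesssim R^{n-1}$, yielding $\ch^{n-1}(\pa^* I_0 \cap B_R) \leq C_0 |K| \leq c_2 R^{n-1}$.

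\emph{Main obstacle.} The delicate step is the volume non-degeneracy $\cl^n(I_0 \cap B_{r_0}(z)) \geq c r_0^n$ at a free boundary point, and this is precisely where the restriction $\al = 1$ is essential: for $\al = 1$ the combination of the quadratic non-degeneracy $|u - a_i| \gtrsim r^2$ with the linear gradient decay $|\nabla u| \lesssim r$ upgrades a single pointwise bound to a volume bound of the optimal order $r^n$. For general $\al \in (0,2)$ the analogous step would require a growth estimate of the form $|\nabla u| \lesssim \dist(\cdot, \Gamma)^{\al/(2-\al)}$, which is not presently available and is the reason the upper bound in Theorem \ref{main} is confined to $\al = 1$.
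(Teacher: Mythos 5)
Your lower bound coincides with the paper's: Theorem~\ref{two phase existence} supplies two phase sets of measure of order $R^n$ in $B_R$, the relative isoperimetric inequality gives the perimeter lower bound for $\pa^*\{u=a_i\}$, and that set sits inside $\pa^* I_0$ --- a fact the paper leaves tacit but you justify explicitly via the (correct) decomposition $\pa^* I_0=\bigsqcup_i \pa^*\{u=a_i\}$. The upper bound is where you take a genuinely different route. The paper goes back into the proof of Theorem~\ref{first part main theorem}: it reuses the cube classification $T_1,\dots,T_5$, shows that $T_4$ cubes are disjoint from $\pa^* I_0$, bounds the remaining cubes by $O(k^{n-1})$ from the estimates already established there, and multiplies by the per-cube local bound of Remark~\ref{local_rmk}. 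You instead run a Besicovitch covering of $\pa^* I_0\cap B_R$ by finitely many balls of the fixed radius $r_0$, bound each ball's contribution by the constant $C_0$ of Theorem~\ref{local estimate}, and count the balls via a volume non-degeneracy at free boundary points, $\cl^n(I_0\cap B_{r_0}(z))\geq c\,r_0^n$ for $z\in\pa^* I_0$. This lemma is not in the paper, but your derivation is sound: Lemma~\ref{nondeg} (with $\al=1$) produces $y\in\overline{B_{r_0}(z)}$ with $|u(y)-a_i|\geq c r_0^2$, while the gradient bound $|\nabla u|\leq C\,\dist(\cdot,\G(u))\leq Cr_0$ from Proposition~\ref{prop growth} keeps $|u-a_i|$ both strictly positive and below $\theta$ on a ball of radius comparable to $r_0$ around $y$, which therefore lies in $I_0$ and has volume of order $r_0^n$. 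Summing over the cover with bounded overlap against Theorem~\ref{first part main theorem} then gives $O(R^{n-1})$ balls and the upper bound. Both routes rest on the same pillars (the local $\ch^{n-1}$ estimate, the $\cl^n(I_0)$ bound, the growth estimate), but the paper's argument is self-contained by reusing its cube machinery, whereas yours is more modular --- it invokes only the statement of Theorem~\ref{first part main theorem}, not its proof --- and isolates the volume non-degeneracy as a clean reusable fact. Your closing remark about the $\al=1$ restriction is accurate: both your volume non-degeneracy step and the paper's local estimate depend on Proposition~\ref{prop growth}, which is proved only for $\al=1$.
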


\begin{rmk}
Unlike the local estimate, here all the constants depend on $u$.
\end{rmk}

\begin{proof}
Without loss of generality we take $x_0=0^n$. According to Theorem \ref{two phase existence}, for sufficiently large $r$, there are two phases $a_1,a_2\in A$, which depend on $r$,  such that
\beqo
\cl^n(B_r\cap \{u=a_j\})\geq cr^n, \quad j=1,2.
\eeqo
Using the relative isoperimetric inequality we obtain that
\beqo
\begin{split}
&\ch^{n-1}(\pa^*\{|u-a_1|>0\}\cap B_r)\\
\geq &C\left(\min\{\cl^n(B_r\cap \{u=a_1\}), \cl^n(B_r\backslash \{u=a_1\} )\} \right)^{\f{n-1}{n}}\geq c_1r^{n-1},
\end{split}
\eeqo
which gives the lower bound. Note that this estimate is valid for any $0<\al<2$.

For the upper bound, we fix $\al=1$ and examine more closely the proof of Theorem \ref{first part main theorem}. Again we consider the domain $\tilde{S}_{kL}$ and classify all the sub-cubes $\{S_i\}_1^{(2k)^n}$ into five classes $T_1$--$T_5$. If $S_i\in T_4$, then $u(x)\equiv a_{j_0}$ for all $x\in \overline{S_i}$, which implies $\ch^{n-1}(S_i\cap \pa^* I_0)=0$. Moreover, for any $x_0\in\pa S_i$, by the definition of $T_4$ it holds that
\beqo
\max\limits_{x\in B_L(x_0)} |u(x)-a_{j_0}|\leq \theta.
\eeqo

By the proof of Lemma \ref{value of L}, we obtain that $B_{L/4}(x_0)\subset \{u=a_{j_0}\}$ and consequently $x_0\not\in \pa^* I_0$. As a result, we have
\beqo
\ch^{n-1}(\overline{S_i}\cap \pa^* I_0)=0.
\eeqo

Using estimates in Theorem \ref{first part main theorem} and Remark \ref{local_rmk}, we have for large enough $k$
\beq\label{upper bd free bdy}
\begin{split}
&\ch^{n-1}(\pa^*I_0\cap \tilde{S}_{kL})\\
\leq & \sum\limits_{S_i\in T_1\cup T_2\cup T_3\cup T_5} \ch^{n-1}(\pa^*I_0\cap \overline{S_i}) \\
\leq &(|T_1|+|T_2|+|T_3|+|T_5|) C(L)\\
\leq & c_2(W,u) k^{n-1}.
\end{split}
\eeq
The upper bound follows immediately from \eqref{upper bd free bdy} and the proof is complete.

\end{proof}

\end{document}